\documentclass[final]{siamart251216}
\usepackage[backend=biber,sorting=none]{biblatex}
\usepackage{amsmath}
\usepackage{amssymb}
\usepackage{makecell}
\usepackage{hyperref}
\usepackage{cleveref}
\usepackage{graphicx}
\usepackage{amsfonts}
\usepackage{float}
\usepackage{mathtools}
\usepackage{csquotes}
\usepackage{microtype}
\allowdisplaybreaks

\usepackage[textsize=tiny]{todonotes}
\AtBeginBibliography{\emergencystretch 3em}

\newtheorem{thm}{Theorem}

\newtheorem{lem}{Lemma}
\newtheorem{cor}{Corollary}
\newtheorem{prop}{Proposition}

\newtheorem{defn}{Definition}

\newtheorem{rem}{Remark}
\newtheorem{Example}{Example}
\DeclareMathOperator*{\dx}{\, \text{d}x}

\DeclareMathOperator*{\SPD}{\mathcal{S}_{++}}
\DeclareMathOperator*{\argmin}{arg\,min}
\DeclareMathOperator*{\argmax}{arg\,max}
\DeclareMathOperator*{\tr}{\mathrm{tr}}
\DeclareMathOperator*{\GC}{\mathop{\Gamma\text{-}\lim}}
\title{Existence of Minimizers for the Gaussian Mumford--Shah Model via Covariance Floors on the Value Distribution\thanks{\textbf{Funding:} Fabian Conrad was funded by the Deutsche Forschungsgemeinschaft (DFG, German Research Foundation) -- 333849990/GRK2379 (IRTG Modern Inverse Problems).}}
\author{Fabian Conrad\thanks{IGPM, RWTH Aachen University, Germany (\email{conrad@ssd.rwth-aachen.de}).}}
\date{}
\begin{document}
\maketitle
\begin{abstract}
The Gaussian Mumford--Shah model segments an image into regions by minimizing a continuous multivariate Gaussian negative log-likelihood term plus a perimeter penalty. It is used for scalar- and vector-valued images, in particular hyperspectral images, but the existence of a minimizer in general is not guaranteed. The energy is unbounded below whenever the covariance of a single region degenerates, regardless of the perimeter of the regions. A quantitative covariance floor on the covariances of all regions, or equivalently uniform Hölder continuity with exponent at most one of all one-dimensional marginals of the histogram, guarantees a lower bound on the energy and the existence of a minimizer. Our approach therefore shows that existence follows from a  property of the histogram of the image, and we furthermore argue that scale-independent covariance floors cannot exist. Admitting a quantitative covariance floor is generic: it holds almost surely for all images with suitable noise. As applications of our theory, we show the convergence of regularized models used in practice and of numerical methods.
\end{abstract}
\begin{keywords}
Mumford--Shah functional, variational image segmentation, Gaussian models, covariance degeneracy, existence of minimizers
\end{keywords}

\begin{MSCcodes}
49J45, 49Q20, 68U10, 94A08, 62H35
\end{MSCcodes}

\section{Introduction}
Region competition with multivariate Gaussian statistics \cite{ST1}, which we will refer to as the Gaussian Mumford--Shah model, and versions of it are used in the context of image segmentation \cite{RoussonDeriche,Cohrs_2022,Conrad_Berkels_2025_Hyperspectral, seaice}, providing, for example, competitive results on benchmark hyperspectral datasets. The multivariate case has full covariance matrices, which capture the high correlation between bands in hyperspectral imaging. Let $g \in L^{\infty}((0,1)^2,\mathbb{R}^N)$ be a bounded measurable image. For a fixed number of segments $k \in \mathbb{N}$, 
we consider a partition $\mathcal{O}=(O_1, \dots,O_k)$ of $(0,1)^2$ with $O_l \subset (0,1)^2$ having finite perimeter in $(0,1)^2$, denoted by $\mathrm{Per}(O_l,(0,1)^2)$, such that
\[
\bigcup_{l \in \{1, \dots, k \} } O_l=(0,1)^2 \quad \wedge \quad \forall l \neq l': \, O_l \cap O_{l'}=\emptyset.
\]
We denote the set of all such partitions by $P_k$. Let $\SPD(N)$ denote the set of symmetric positive definite matrices in $\mathbb{R}^{N \times N}$. 
For any $S \in \SPD(N)$, we define the Mahalanobis norm of $z \in \mathbb{R}^N$ by
\[
\vert z \vert_{S}^2:=z^TS^{-1}z.
\]
Then, we define the  \textbf{Gaussian Mumford--Shah model (GMS)} by
\begin{equation}\label{Def: GMS}
\begin{split}
E:{}&P_k \times (\mathbb{R}^N)^k \times \SPD(N)^k  \to \mathbb{R}, \\
&(\mathcal{O},\mu,\Sigma) \mapsto \sum_{l=1}^k \bigg( \int_{O_l} f(g(x) \mid \mu_l, \Sigma_l) \dx + \lambda \mathrm{Per}(O_l,(0,1)^2)\bigg),
\end{split}
\end{equation}
where
\[
f(g(x) \mid \mu_l, \Sigma_l)=\frac{1}{2} \vert g(x)-\mu_l \vert_{\Sigma_l}^2 + \frac{1}{2} \ln \det \Sigma_l +\frac{N}{2} \ln 2 \pi
\]
is the indicator function given by the negative log-likelihood of a multivariate Gaussian $\mathcal{N}(\mu_l,\Sigma_l)$ and $\lambda>0$ is a fixed length parameter.  If we set $\Sigma=(\Sigma_1, \dots,\Sigma_k)=(I,\dots,I)$ to be the identity matrix $I$ in every component, we recover the classical piecewise constant Mumford--Shah model
\cite{MS} up to an additive constant, for which existence and even regularity theory is well-known \cite{Maggi}. However, the existence of a minimizer of the GMS \eqref{Def: GMS}, and by extension the region competition model \cite{ST1}, has not been addressed so far to our knowledge. When applying the
direct method of the calculus of variations, the main obstructions are the unboundedness from below, which prevents the extraction of a minimizing sequence converging to a finite infimum, the fact that $\SPD(N)$ is not closed and hence might not contain the limit of such a sequence, and the lack of lower semi-continuity. Following the arguments of \cite{Cohrs_2024}, if $g(O)$ is contained in some affine hyperplane $H \subset \mathbb{R}^N$ for some $O \subset (0,1)^2$
with positive measure $\vert O \vert >0$ and finite perimeter, we can find a sequence of statistical variables $(\mu^{(n)},\Sigma^{(n)})_{n \in \mathbb{N}} \subset (\mathbb{R}^N)^k \times \SPD(N)^k$ such that for the fixed partition $(O,(0,1)^2\setminus O,\emptyset,\dots,\emptyset) \in P_k$, which in particular has a fixed perimeter contribution,
the energy diverges to $-\infty$, i.e.
\[
\lim_{n \to \infty} E[(O,(0,1)^2\setminus O,\emptyset,\dots,\emptyset), \mu^{(n)},\Sigma^{(n)} ]=-\infty.
\]
Let us introduce the (continuous) \textbf{sample mean} and \textbf{sample covariance} for any $O \subset (0,1)^2$ with $\vert O \vert >0$ by
\begin{equation}
\begin{split}
m_O:&=\frac{1}{\vert O \vert} \int_O g(x)\dx \quad \text{and} \\
 \quad C_O:&=\frac{1}{\vert O \vert} \int_{O} \big( g(x)-m_O \big) \otimes \big( g(x)-m_O \big) \dx.
\end{split}
\end{equation}
For the case $\vert O \vert=0$, this definition is vacuous. We introduce the \textbf{covariance floor condition (CFC)} on $g \in L^{\infty}((0,1)^2,\mathbb{R}^N)$, which means there exist $C>0$ and
$p >0$ such that
\begin{equation}\label{Def: CFC-Intro}
y^TC_Oy \geq C\vert O \vert^p 
\end{equation}
for all $y \in S^{N-1}:=\{y \in \mathbb{R}^N \mid \vert y \vert=1 \}$ and $O \subset (0,1)^2$ with $\vert O \vert >0$. Equivalently, $C_O \succeq C \vert O \vert^p I$, where  $\succeq$ denotes the Loewner order on symmetric matrices, i.e. $A \succeq B$ means that $A-B$ is positive semi-definite for symmetric matrices $A,B \in \mathbb{R}^{N \times N}$ . We show that for $g \in L^{\infty}((0,1)^2,\mathbb{R}^N)$, this condition can only hold if $p \geq 2$. In particular, a uniform (scale-independent) covariance floor $C_O \succeq CI$ for all $O \subset (0,1)^2$ with $\vert O \vert >0$ cannot exist. The CFC \eqref{Def: CFC-Intro} is a property of the pushforward measure $g_{\sharp} \mathcal{L}^2$. Thus, it is intrinsic to the data. We also prove that the CFC is equivalent to a uniform Hölder bound on the cumulative distributions of the pushforward measure $g_{\sharp} \mathcal{L}^2$. In research and literature, this concept is related to the Frostman lemma \cite{mattila1995geometry}, occupation densities \cite{OccupationDensity} and Lévy concentration functions \cite{ConcentrationFunctions}, and we will refer to these at the respective points. Most notably, none of these three concepts requires uniformity in $y \in S^{N-1}$, which the CFC does. The discrete sample means and covariance matrices are studied at a fixed scale, whereas the continuum sample covariances need lower bounds at every scale. Non-degeneracy of the sample covariance in the discrete case depends on having at least $N+1$ affinely independent samples, while in the continuous case, the question is how fast $C_{O}$ may degenerate as $\vert O \vert \to 0$.

Moreover, we show that adding suitable noise to any image yields a
covariance floor almost surely. For $\sigma>0$ and a suitable fractional Gaussian field (restricted from $\mathbb{R}^2$ to $(0,1)^2$) $W$, the noisy image
\[
g + \sigma W
\]
satisfies the CFC almost surely and any image can be approximated by an image satisfying the CFC arbitrarily well in the $L^{\infty}((0,1)^2,\mathbb{R}^N)$ topology. Hence, the CFC is not vacuous and feasible.

We show that imposing the CFC on $g$ implies both the existence of an entropy expression as a lower bound for the GMS and the existence of a uniform lower bound for the Gaussian Mumford--Shah model. As the first application of the CFC (and the existence of lower bounds for the GMS), we show 
the convergence of minimizers, including convergence rates for the energies, of the \textbf{perturbed Gaussian Mumford--Shah model (pGMS)}
\begin{equation}\label{Def: pGMS}
E^{(\epsilon)}[\mathcal{O},\mu,\Sigma]:=E[\mathcal{O},\mu,\Sigma]+\frac{\epsilon}{2} \sum_{l=1}^k \vert O_l \vert \mathrm{tr} \Sigma_l^{-1}
\end{equation}
to minimizers of the GMS by using $\Gamma$-convergence of reduced, partition-only based versions of $E^{(\epsilon)}$ and $E^{(0)}:=E$ as $\epsilon \to 0$. The pGMS is the variational formulation of regularizing the sample covariance matrices by $C_{O_l} \gets C_{O_l} + \epsilon I \succeq \epsilon I$ ($l=1, \dots, k$), a common way to enforce positive definiteness in practice. A closely related regularization, clipping the eigenvalues of $C_{O_l}$ from below, corresponds to the constraint $\Sigma_l \succeq \epsilon I$ and was used in \cite{Cohrs_2022} for a model with a non-squared Mahalanobis data term. For that model, \cite{Cohrs_2024} showed existence for all $g \in L^{\infty}((0,1)^2,\mathbb{R}^N)$ as long as $\epsilon>0$ and $\Gamma$-convergence as $\epsilon \to 0$, but not necessarily convergence of minimizers. Without the CFC, the GMS itself can be unbounded from below, and the regularization does not remove this singularity in the limit $\epsilon \to 0$. For images satisfying the CFC, this singularity does not occur. Consequently, the GMS \eqref{Def: GMS} admits a minimizer for every $g$ satisfying the CFC \eqref{Def: CFC-Intro}. As the second application, we show that after replacing the labels $\mathcal{O}$ in the GMS \eqref{Def: GMS} with a convex relaxation $u$,
and alternately updating the labels $u$ and the statistical variables $(\mu,\Sigma)$, we have convergence of a subsequence. The second application highlights that commonly applied numerical approaches to these models also converge, at least along a subsequence. In both of these applications, a lower bound was the biggest missing ingredient so far.

This paper starts by discussing the theoretical aspects of the CFC. In particular, we show that satisfying the CFC is equivalent to the marginal distributions of $g$ being Hölder continuous uniformly in $y \in S^{N-1}$ and that $p \geq 2$ is necessary. 
It is then argued that the pushforward measure of $g$, the continuous version of the histogram, is the correct invariant for the existence of covariance floors. Using techniques from Fourier analysis, we prove that a noisy image will almost surely satisfy the CFC. This section is independent of the Gaussian Mumford--Shah context. In the following section, we use information theory to show that the GMS admits a lower bound for images satisfying the CFC. With this lower bound at hand, we turn to our applications, which cover the existence of minimizers of the GMS and the convergence of the alternating minimization. The final section contains some numerical experiments illustrating our main theorems.

\section{Covariance floors for images}

\begin{defn}[Covariance floor condition]\label{Def: CFC}
Let $g \in L^{\infty}((0,1)^2,\mathbb{R}^N)$. We say $g$ satisfies the
\textbf{covariance floor condition (CFC)} with exponent $p > 0$ if there
exists $C>0$ such that
\[
C_O \succeq C \vert O \vert^{p} I
\]
for all measurable $O \subset (0,1)^2$ with $\vert O \vert > 0$.
\end{defn}

We also introduce the pushforward measure of $g$, which we denote by $\nu$ without reference to $g$ since $g$ is fixed,
\[
\nu(A):=g_{\sharp}\big(\mathcal{L}^2\big\vert_{(0,1)^2}\big)(A)=\vert \{ x \in (0,1)^2 \mid g(x) \in A \} \vert
\]
for any Borel set $A \subset \mathbb{R}^N$. Similarly, we define for any $y \in S^{N-1}$ the projected pushforward measure 
\[
\nu_y(B):=(y^Tg)_{\sharp}\big(\mathcal{L}^2\big\vert_{(0,1)^2}\big)(B)=\vert \{ x \in (0,1)^2 \mid y^Tg(x) \in B \} \vert
\]
for any Borel set $B \subset \mathbb{R}$. By the Cramér--Wold theorem \cite{ProbabilityAndMeasure}, the marginals $\nu_y$ completely determine $\nu$ and vice versa. 
Closely related is the following definition: 

\begin{defn}[Regular distribution condition]\label{Def: RDC}
Let $g \in L^{\infty}((0,1)^2,\mathbb{R}^N)$. We say $g$ satisfies the \textbf{regular distribution condition (RDC)} with exponent $p > 0$, if for all $y \in S^{N-1}$, the cumulative
distributions
\[
F_y:\mathbb{R}\to[0,1],\ t\mapsto \nu_y((-\infty,t])
\]
are $\frac{2}{p}$-Hölder continuous with uniform Hölder constant, i.e. there exists $C>0$ independent of $y \in S^{N-1}$ such that
\[
\sup_{y \in S^{N-1}} \Vert F_y \Vert_{C^{0,\frac{2}{p}}(\mathbb{R})} \leq C.
\]
\end{defn}

We will now state the main theorem of this section, which gives an equivalent characterization of the CFC.

\begin{thm}[Equivalence of conditions]\label{Thm: Equivalence}
Let $g \in L^{\infty}((0,1)^2,\mathbb{R}^N)$ and let $p>0$. Then the following are equivalent:
\begin{itemize}
    \item $g$ satisfies the CFC with exponent $p$.
    \item $g$ satisfies the RDC with exponent $p$.
\end{itemize}
Neither condition can be satisfied by any $g \in L^{\infty}((0,1)^2,\mathbb{R}^N)$ if $p<2$.
\end{thm}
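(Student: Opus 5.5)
The argument rests on one identity: for every $y \in S^{N-1}$ and every $O$ with $\vert O\vert>0$, the quantity $y^TC_Oy$ is the variance of the scalar function $y^Tg$ on $O$, taken with respect to the normalized measure $\mathcal{L}^2\vert_O/\vert O\vert$. So both conditions concern the one-dimensional measures $\nu_y$ only, and I would prove each implication separately for a fixed direction $y$, keeping track that every constant is independent of $y$. I will write $q:=2/p$ and use that, for an exponent $q\le 1$, the RDC amounts to the uniform bound $\nu_y([t,t+h])\le C h^{q}$ for all $t\in\mathbb{R}$ and $h>0$. The sup-norm part of the Hölder norm is automatic, since $0\le F_y\le 1$. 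Continuity of $F_y$ rules out atoms, so open and closed intervals can be treated the same way.

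\textbf{CFC $\Rightarrow$ RDC.} Fix $y$, $t$ and $h>0$, and set $O:=\{x\in(0,1)^2 \mid y^Tg(x)\in[t,t+h]\}$, so that $\vert O\vert=\nu_y([t,t+h])$. If $\vert O\vert=0$ there is nothing to show. Otherwise $y^Tg$ takes values in an interval of length $h$ on $O$, so its variance there is at most $h^2/4$ (Popoviciu's inequality). Combined with the CFC this gives $C\vert O\vert^p\le h^2/4$, that is,
\[
\nu_y([t,t+h])\le (4C)^{-1/p}h^{2/p}.
\]
Together with the trivial bound $\nu_y\le 1$, this yields the uniform $2/p$-Hölder estimate for $F_y$. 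This step is only needed when $p\ge 2$, since the case $p<2$ is settled separately below.

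\textbf{RDC $\Rightarrow$ CFC.} Fix $y$ and $O$ with $m:=\vert O\vert>0$. Let $\rho:=(y^Tg)_\sharp(\mathcal{L}^2\vert_O)$, which is a measure of mass $m$ with $\rho\le\nu_y$, and let $a:=y^Tm_O$ be its barycenter. Choose $r>0$ with $C(2r)^q=m/2$. Then
\[
\rho([a-r,a+r])\le \nu_y([a-r,a+r])\le m/2,
\]
so at least mass $m/2$ of $\rho$ lies at distance $\ge r$ from $a$. Hence
\[
y^TC_Oy=\frac1m\int (s-a)^2\,d\rho(s)\ge \frac{r^2}{2}=\frac12\Big(\frac{m}{2^{1+q}C}\Big)^{2/q}=c\,m^{p},
\]
with $c$ independent of $y$ and $O$. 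This is exactly $C_O\succeq c\vert O\vert^pI$.

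\textbf{Necessity of $p\ge 2$.} By the equivalence it suffices to rule out the RDC for $q=2/p>1$. A function that is Hölder continuous with exponent $>1$ on $\mathbb{R}$ is constant, because its difference quotients tend to $0$ and it is therefore differentiable with zero derivative. On the other hand, $g\in L^\infty$ forces $F_y(t)=0$ for $t<-\Vert g\Vert_\infty$ and $F_y(t)=1$ for $t\ge\Vert g\Vert_\infty$, a contradiction. (If one instead reads the RDC as only the interval-mass bound, the same contradiction follows: covering $[-\Vert g\Vert_\infty,\Vert g\Vert_\infty]$ by $M$ intervals of length $2\Vert g\Vert_\infty/M$ gives total mass $\lesssim M^{1-q}\to0$, which is impossible since the total mass is $1$.)

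The only delicate point is this bookkeeping in the Hölder-norm convention, which is the step I expect to need most care. Everything else follows from the variance identity and the two elementary bounds above.
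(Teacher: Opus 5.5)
Your proposal is correct and follows the paper's proof essentially step for step: Popoviciu's inequality on the preimage of an interval for CFC $\Rightarrow$ RDC, and the same bathtub argument around the barycenter $y^Tm_O$ for RDC $\Rightarrow$ CFC, where your constant $\tfrac12(2^{1+q}C)^{-p}$ coincides with the paper's $2^{-(p+3)}C^{-p}$. The only variation is in ruling out $p<2$: the paper passes through the Frostman-type bound $\nu_y(B)\le C\mathcal{H}^{2/p}(B)$ together with $\mathcal{H}^{2/p}(\mathbb{R})=0$, whereas you use that a function with Hölder exponent $>1$ is constant (or a finite covering of the bounded range), which is the same idea in a more elementary form.
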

\begin{rem}[Dependence on the histogram]\label{Rem: Histogram}
Theorem \ref{Thm: Equivalence} shows that whether $g$ admits a covariance floor depends only on its value distribution and not on its spatial arrangement. In the language of imaging sciences, the CFC only depends on the (continuous) histogram of the image $g$, which is the pushforward measure $\nu$. Furthermore, the requirement $p \geq 2$ shows us that on small scales $\vert O \vert \ll 1$, the continuous sample covariance matrix $C_O$ cannot shrink arbitrarily slowly.
In particular, demanding a universal lower bound for all scales, 
\[
C_O \succeq CI 
\]
for all $O \subset (0,1)^2$ with $\vert O \vert >0$, can never be achieved.
\end{rem}

Furthermore, the sample covariance matrix does not even have to shrink at all and hence pinching the covariance matrix becomes impossible:

Let $g \in L^{\infty}((0,1)^2,\mathbb{R}^N)$ have two Lebesgue points $x_1 \neq x_2$ such that $g(x_1)=v_1 \neq v_2=g(x_2)$. Let us set $O_r=B_r(x_1) \cup B_r(x_2)$ with
$B_r(x_i):=\{ x \in (0,1)^2 \mid \vert x-x_i \vert \leq r \}$ $(i=1,2)$. Let $r>0$ be so small that $B_r(x_1)$ and $B_r(x_2)$ are disjoint and contained in $(0,1)^2$. Then each ball satisfies $\vert B_r(x_1) \vert =\vert B_r(x_2) \vert = \frac{\vert O_r \vert}{2}$, and since $x_1,x_2$ are also Lebesgue points of $g \otimes g$ (as $g \in L^{\infty}((0,1)^2,\mathbb{R}^N)$), we obtain
\begin{align*}
&m_{O_r}= \frac{1}{2} \bigg( \frac{1}{\vert B_r(x_1) \vert }\int_{B_r(x_1)}g(x) \dx + \frac{1}{\vert B_r(x_2) \vert } \int_{B_r(x_2)} g(x) \dx \bigg)  \to \frac{v_1+v_2}{2} \quad \text{and} \\
&\frac{1}{\vert O_r \vert}\int_{O_r} g(x) \otimes g(x) \dx \to \frac{v_1 \otimes v_1+v_2 \otimes v_2}{2} 
\end{align*}
as $r \to 0$. Using $C_{O_r}=\frac{1}{\vert O_r \vert}\int_{O_r} g(x) \otimes g(x) \dx{} - m_{O_r} \otimes m_{O_r}$, we conclude
\[
\lim_{r \to 0} C_{O_r}= \frac{v_1 \otimes v_1+v_2 \otimes v_2}{2}-\frac{(v_1+v_2) \otimes (v_1+v_2)}{4}=\frac{(v_1-v_2) \otimes (v_1-v_2)}{4},
\]
for which $\frac{\vert v_1-v_2 \vert^2}{4}$ is a non-zero eigenvalue and hence no bound of the form $C_{O_r} \preceq C \vert O_r \vert^{p'}I$ can hold for any $p'>0$.

We will now prove Theorem \ref{Thm: Equivalence}:
\begin{proof}
($\implies$): Let the CFC hold with exponent $p > 0$, constant $C>0$ and let $y \in S^{N-1}$ be arbitrary. Let $B \subset \mathbb{R}$ be any Borel set and set $O=(y^Tg)^{-1}(B)$ with
$\vert O \vert >0$. By definition of the pushforward measure, we have $\nu_y(B)=\vert O \vert$. The CFC with constant $C>0$ and exponent $p > 0$ then implies
\[
C \nu_y(B)^p =C \vert O \vert^p \leq y^T C_O y= \frac{1}{\vert O \vert} \int_O (y^Tg(x)-y^Tm_O)^2 \dx \leq \frac{\mathrm{diam}(B)^2}{4}, 
\]
since $y^Tg$ takes values only in $B \subset [\inf B,\sup B]$ on $O$, and by Popoviciu's inequality \cite{BhatiaDavis2000} the variance of a real random variable with values in an interval of length $\ell$ is at most $\ell^2/4$, where $\sup B-\inf B=\mathrm{diam}(B)$ for $B \subset \mathbb{R}$. For $\vert O \vert=0$ or infinite diameter $\mathrm{diam}(B)=\infty$, the inequality becomes trivial. Re-arranging this 
inequality and taking the supremum yields
\[
\sup_{y \in S^{N-1}}\nu_y(B) \leq \frac{1}{(4C)^{\frac{1}{p}}} \mathrm{diam}(B)^{\frac{2}{p}}.
\]
Let $t_1 < t_2$ be two distinct real numbers. In particular, the inequality above holds for $B=[t_1,t_2]$ and hence we have for the 
cumulative distribution
\[
\sup_{y \in S^{N-1}} \vert F_y(t_1)-F_y(t_2) \vert \leq \sup_{y \in S^{N-1}} \nu_y((t_1,t_2]) \leq  \frac{1}{(4C)^{\frac{1}{p}}} \vert t_1-t_2 \vert^{\frac{2}{p}},
\]
which directly implies the RDC with exponent $p$ and constant $\frac{1}{(4C)^{\frac{1}{p}}}$.

($\impliedby$): Conversely, let the RDC hold with exponent $p > 0$, constant $C>0$ and let $y \in S^{N-1}$ be arbitrary. Let $O \subset (0,1)^2$
with $\vert O \vert >0$ be arbitrary. For any Borel set $B \subset \mathbb{R}$, we set $I_B:=[\inf B, \sup B]$ (possibly infinite). Since $F_y$ is continuous by the RDC, $\nu_y$ has no atoms, i.e., $\nu_y(\{t\})=0$ for all $t \in \mathbb{R}$, and hence $\nu_y(I_B)=\nu_y((\inf B,\sup B])=F_y(\sup B)-F_y(\inf B)$. Therefore,
\[
\nu_y(B) \leq \nu_y(I_B) \leq \vert F_y(\sup B) -F_y(\inf B) \vert \leq C \vert \sup B-\inf B \vert^{\frac{2}{p}}\leq C \mathrm{diam}(B)^{\frac{2}{p}},
\]
since $B \subset I_B$ and the RDC holds. Consequently, the restricted pushforward measure
\[
\nu_{y,O}(B):=\big\vert O \cap (y^Tg)^{-1}(B) \big\vert
\]
satisfies
\begin{equation}\label{Measure condition I}
\nu_{y,O}(B) \leq \nu_y(B) \leq C \mathrm{diam}(B)^{\frac{2}{p}}
\end{equation}
for any Borel set $B \subset \mathbb{R}$ and 
\begin{equation}\label{Measure condition II}
\nu_{y,O}(\mathbb{R})=\vert O \vert.
\end{equation}
We will now execute a bathtub principle style argument \cite{LiebLoss} to show that 
\[
\int_{\mathbb{R}} (t-\bar{t})^2\text{d}\nu_{y,O}(t)
\]
for $\bar{t}= \frac{1}{\nu_{y,O}(\mathbb{R})}\int_{\mathbb{R}}t \text{d}\nu_{y,O}(t)$ is bounded below by a positive constant independent of $y$. For $r=(\frac{\vert O \vert}{2C})^{\frac{p}{2}}>0$
and the interval $I=[\bar{t}-\frac{r}{2},\bar{t}+\frac{r}{2}]$, we have by 
\[
\nu_{y,O}(I) \leq C r^{\frac{2}{p}}=\frac{\vert O \vert}{2}.
\]
The second condition \eqref{Measure condition II} then gives us
\[
\nu_{y,O}(\mathbb{R} \setminus I)= \vert O \vert-\nu_{y,O}(I) \geq \frac{\vert O \vert }{2}.
\]
Since we have $(t-\bar{t})^2 \geq \frac{r^2}{4}$ for all $t \in \mathbb{R} \setminus I$, we have
\[
\int_{\mathbb{R}} (t-\bar{t})^2\text{d}\nu_{y,O}(t) \geq \int_{\mathbb{R} \setminus I } (t-\bar{t})^2 \text{d}\nu_{y,O}(t) \geq \frac{r^2}{4} \frac{\vert O \vert}{2}=\frac{1}{2^{p+3}C^p} \vert O \vert^{p+1}.
\]
We conclude the proof by noting that, by definition of the restricted pushforward measure, 
\begin{align*}
y^T C_O y&=\frac{1}{\vert O \vert} \int_O (y^Tg(x)-y^Tm_O)^2 \dx   \\
& = \frac{1}{\vert O \vert } \int_{\mathbb{R}} (t-\bar{t})^2\text{d}\nu_{y,O}(t) \geq \frac{1}{2^{p+3}C^p} \vert O \vert^{p},
\end{align*}
which by the Courant--Fischer--Weyl principle for eigenvalues \cite{JoharinadJost2023} implies the CFC with exponent $p$ and constant $\frac{1}{2^{p+3}C^p}$.

($p \geq 2$): Lastly, we show that $p \geq 2$ is required and we will show that the marginals are closely related to  Frostman measures \cite{mattila1995geometry} uniformly in $S^{N-1}$. Let $B \subset \mathbb{R}$ be 
any Borel set. For $\delta >0$, let $B \subset \bigcup_{i \in I}U_i$ be a countable covering of $B$ by sets with $\sup_{i \in I}\mathrm{diam}(U_i) < \delta$.
Assuming the CFC holds with exponent $p>0$ and constant $C>0$, we have by the previous arguments for any $y \in S^{N-1}$
\[
\nu_y(B) \leq \sum_{i \in I} \nu_y(U_i) \leq C \sum_{i \in I} \mathrm{diam}(U_i)^{\frac{2}{p}}.
\]
Taking the infimum over all such covers and sending $\delta \to 0$ yields
\[
\nu_y(B) \leq C \mathcal{H}^{\frac{2}{p}}(B)
\]
for the $\frac{2}{p}$-dimensional Hausdorff measure $\mathcal{H}^{\frac{2}{p}}$. In particular, we must have 
\[
1=\nu_y(\mathbb{R}) \leq C \mathcal{H}^{\frac{2}{p}}(\mathbb{R}),
\]
which is a contradiction for $p<2$, since $\mathcal{H}^{\alpha}(\mathbb{R})=0$ for $\alpha>1$.
\end{proof}

Provided $\nu_y$ comes from a density $h_y$, i.e. 
\[
\nu_y(B)=\int_{(0,1)^2} \mathbf{1}_B(y^Tg(x))\dx= \int_{B} h_y(t) \text{d}t
\]
holds for all Borel sets $B \subset \mathbb{R}$, we see that if the (marginal) densities $h_y$ satisfy $\sup_{y \in S^{N-1}} \Vert h_y \Vert_{L^{\frac{p}{p-2}}} \leq C < \infty$, with the convention $\frac{p}{0}=\infty$, we immediately get
\begin{equation}\label{Estimate: Density gives RDC}
\nu_y((t_1,t_2])=\int_{t_1}^{t_2} h_y(t) \text{d}t \leq \sup_{y \in S^{N-1}} \Vert h_y \Vert_{L^{\frac{p}{p-2}}} \vert t_1-t_2 \vert^{\frac{2}{p}} \leq C \vert t_1-t_2 \vert^{\frac{2}{p}}
\end{equation}
by Hölder's inequality. Taking the supremum over $y \in S^{N-1}$ yields the RDC with exponent $p \geq 2$. If $g$ is smooth with $\vert \{ \nabla y^Tg=0 \} \vert =0$, an application of the co-area formula \cite{EvansMT} reveals that for any $y \in S^{N-1}$ and Borel set $B \subset \mathbb{R}$ we have
\[
\nu_y(B)=\int_{(0,1)^2} \mathbf{1}_B(y^Tg(x))\dx= \int_{B} \underbrace{\bigg( \int_{(y^Tg)^{-1}(t)} \frac{1}{\vert \nabla (y^Tg)(x) \vert} \text{d}\mathcal{H}^1(x) \bigg)}_{=h_y(t)} \text{d}t.
\]
While spatial regularity is not needed for a map $g \in L^{\infty}((0,1)^2,\mathbb{R}^N)$ to satisfy the RDC, one can impose regularity conditions on $g$ which imply
$\sup_{y \in S^{N-1}} \Vert h_y \Vert_{L^{\frac{p}{p-2}}} \leq C < \infty$.
\begin{Example} [Affine maps in $\mathbb{R}^2$]
As an immediate application, we see that affine maps to $\mathbb{R}^2$ 
\[
g(x)=Ax+b
\]
with $A \in \mathrm{GL}(2)$ and $b \in \mathbb{R}^2$ satisfy the CFC. In this case, we see that 
\[
\vert h_y(t) \vert \leq \frac{1}{\vert A^Ty \vert} \sup_{s \in \mathbb{R}} \mathcal{H}^1\big( \{x \in (0,1)^2 \mid y^TAx+y^Tb=s \} \big) \leq \frac{\sqrt{2}}{\sigma_{\min}(A)},
\]
where $\sigma_{\min}(A)$ is the smallest singular value of $A$. This implies $\sup_{y \in S^{N-1}} \Vert h_y \Vert_{L^{\infty}} \leq \frac{\sqrt{2}}{\sigma_{\min}(A)}$ and by the same argument as above \eqref{Estimate: Density gives RDC}, 
affine maps to $\mathbb{R}^2$ satisfy the RDC with exponent $p=2$.
\end{Example}

Our analysis so far motivates the following definition:
\begin{defn}[Non-degenerate images]\label{Def: Space of ND images}
For $p \geq 2$, we define the space of \textbf{non-degenerate images} with exponent $p$ as 
\[
\mathcal{A}(p):=\big\{ g \in L^{\infty}((0,1)^2,\mathbb{R}^N) \mid g \text{ satisfies the CFC with exponent }p \big\}.
\]
\end{defn}

We record some properties and invariances of $\mathcal{A}(p)$:
\begin{lem}[Structure of $\mathcal{A}(p)$]
Let $g \in \mathcal{A}(p)$ for $p \geq 2$. Then we have
\begin{itemize}
    \item $Ag+b \in \mathcal{A}(p)$ for $A \in \mathrm{GL}(N), b \in \mathbb{R}^N$,
    \item $g \circ \tau \in \mathcal{A}(p)$ for
    \[
    \tau \in T:=\{t:(0,1)^2 \to (0,1)^2 \mid t \text{ preserves }\mathcal{L}^2 \},
    \]
    \item and $\mathcal{A}(p) \subset \mathcal{A}(p')$ for all $2 \leq p \leq p'$.
\end{itemize}
\end{lem}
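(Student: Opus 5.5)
We verify the three properties directly from the definition of the CFC, using the equivalence with the RDC from Theorem \ref{Thm: Equivalence} only where it is convenient. Throughout, let $C>0$ be a constant such that $C_O \succeq C\vert O\vert^p I$ holds for every measurable $O\subset(0,1)^2$ with $\vert O\vert>0$.

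\textbf{Affine invariance.} Set $\tilde g = Ag+b$. By linearity of the integral, $m_O^{\tilde g} = A m_O + b$. Hence $\tilde g - m_O^{\tilde g} = A(g-m_O)$ on $O$, and therefore $C_O^{\tilde g} = A C_O A^T$. For $y\in S^{N-1}$ we get
\[
y^T C_O^{\tilde g} y = (A^Ty)^T C_O (A^Ty) \geq C\vert O\vert^p \vert A^Ty\vert^2 \geq C\,\sigma_{\min}(A)^2\vert O\vert^p .
\]
Since $A\in\mathrm{GL}(N)$, we have $\sigma_{\min}(A)>0$. Thus $\tilde g$ satisfies the CFC with the same exponent $p$ and constant $C\sigma_{\min}(A)^2$.

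\textbf{Invariance under measure-preserving maps.} Let $\tau\in T$ be measurable and measure preserving. Then $(g\circ\tau)_\sharp\mathcal{L}^2 = g_\sharp(\tau_\sharp\mathcal{L}^2) = g_\sharp\mathcal{L}^2$, so $g\circ\tau$ has the same pushforward measure $\nu$ and the same marginals $\nu_y$. Consequently the cumulative distributions $F_y$ coincide for $g$ and $g\circ\tau$. Since $g$ satisfies the RDC with exponent $p$ by Theorem \ref{Thm: Equivalence}, so does $g\circ\tau$, and the theorem gives back the CFC. This step is the natural one to route through the histogram characterization of Remark \ref{Rem: Histogram}.

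A direct proof is harder, because $\tau(O)$ need not be measurable and $\tau$ need not be invertible. One would instead pass to the set $\tau^{-1}$-image, which is awkward. Using the equivalence avoids this problem, and I expect it to be the only point requiring care. One should also check that $g\circ\tau$ is again in $L^\infty$: it is, since it takes values in the same essential range.

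\textbf{Monotonicity in $p$.} For $p\le p'$ and $\vert O\vert\le 1$ we have $\vert O\vert^{p} \geq \vert O\vert^{p'}$. Hence
\[
C_O \succeq C\vert O\vert^p I \succeq C\vert O\vert^{p'} I,
\]
which is the CFC with exponent $p'$ and the same constant.
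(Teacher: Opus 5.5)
Your proposal is correct and follows essentially the same route as the paper. The affine case uses $y^T\tilde C_O y=(A^Ty)^TC_O(A^Ty)\geq C\sigma_{\min}(A)^2\vert O\vert^p$, the measure-preserving case notes that the marginal cumulative distributions are unchanged and then invokes the CFC/RDC equivalence of Theorem \ref{Thm: Equivalence}, and monotonicity follows from $\vert O\vert^p\geq\vert O\vert^{p'}$; your explicit computation $C_O^{\tilde g}=AC_OA^T$ and the check that $g\circ\tau\in L^\infty$ are details the paper leaves implicit.
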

\begin{proof}
Let $g \in \mathcal{A}(p)$ be arbitrary. 

For any $b \in \mathbb{R}^N$, we note that $g$ and $g+b$ have the same covariance matrix and $g$ satisfies the CFC.
Let $A \in \mathrm{GL}(N)$ and $O \subset (0,1)^2$ be arbitrary with $\vert O \vert >0$. For any $y \in S^{N-1}$, the covariance matrix $\tilde{C}_O$ of $Ag$ satisfies 
\[
y^T \tilde{C}_O y= (A^Ty)^T C_O (A^Ty) \geq C \vert O \vert^p \vert A^Ty \vert^2 \geq C \vert O \vert^p \sigma_{\min}(A)^2,
\]
where again $\sigma_{\min}(A)$ is the smallest singular value of $A$ and $C>0$ is the constant coming from the CFC. Hence $Ag$ satisfies the CFC with
the same exponent $p$ and constant $C\sigma_{\min}(A)^2>0$. Combining these facts, we have $Ag+b \in \mathcal{A}(p)$.  

For the invariance under measure-preserving maps, it suffices to note that for all $y \in S^{N-1}$, $y^Tg$ and $(y^Tg) \circ \tau$ have the same cumulative distribution and therefore
both satisfy the RDC with the same constant and exponent $p$.

Since $0 \leq \vert O \vert \leq 1$, we have $ \vert O \vert^{p} \geq \vert O \vert^{p'}$ and hence 
\[
C_O \succeq C \vert O \vert^{p}I \succeq C \vert O \vert^{p'} I
\]
for all $O \subset (0,1)^2$ with $\vert O \vert >0$ and hence $g \in \mathcal{A}(p')$.
\end{proof}
We would like to note that $\mathcal{A}(p)$ is not a convex cone, since
\[
0=\frac{1}{2}g+ \frac{1}{2}(-g) \notin \mathcal{A}(p)
\]
for any $p \geq 2$. It is therefore also not a vector space.

In the imaging sciences, in particular for motif extraction \cite{Motif}, we often have repeated patterns. To check the CFC on the whole image, it is sufficient to check it on every tile:

\begin{lem}[Tiling lemma]\label{Tiling lemma}
Let $g \in L^{\infty}((0,1)^2,\mathbb{R}^N)$ and
\[
(0,1)^2=\bigcup_{h,w=1}^{M}\underbrace{(\frac{(h-1)}{M},\frac{h}{M}) \times (\frac{(w-1)}{M},\frac{w}{M})}_{:=Q_{h,w}}
\]
be a partition (up to null sets) of $(0,1)^2$ into squares. Let $\tau_{h,w}:((0,1)^2,\mathcal{L}^2)\to (Q_{h,w},M^2\mathcal{L}^2)$ be any measure-preserving map.
If $g\vert_{Q_{h,w}}  \circ \tau_{h,w} \in \mathcal{A}(p_{h,w})$ for all $1 \leq h,w \leq M$, then $g \in \mathcal{A}(p)$ for $p=\max_{h,w}p_{h,w}$.
\end{lem}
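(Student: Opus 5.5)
The plan is to avoid working with covariance matrices directly and instead pass through the equivalence of Theorem \ref{Thm: Equivalence}, using the regular distribution condition, which is additive over a partition of the domain. Concretely, for each tile set $g_{h,w}:=g\vert_{Q_{h,w}}\circ\tau_{h,w}\in L^\infty((0,1)^2,\mathbb{R}^N)$. By the inclusion $\mathcal{A}(p_{h,w})\subset\mathcal{A}(p)$ from the structure lemma, each $g_{h,w}$ satisfies the CFC with the common exponent $p=\max_{h,w}p_{h,w}$. Hence, by Theorem \ref{Thm: Equivalence}, each satisfies the RDC with exponent $p$ and some constant $C_{h,w}$. As in the proof of that theorem, this gives for every Borel $B\subset\mathbb{R}$ and every $y\in S^{N-1}$
\[
\big\vert\{x\in(0,1)^2 \mid y^Tg_{h,w}(x)\in B\}\big\vert \le C_{h,w}\,\mathrm{diam}(B)^{\frac{2}{p}}.
\]
Alternatively, I can skip the inclusion lemma and compare exponents directly: for $\mathrm{diam}(B)\le 1$ one has $\mathrm{diam}(B)^{2/p_{h,w}}\le \mathrm{diam}(B)^{2/p}$, and for $\mathrm{diam}(B)>1$ the left-hand side is trivially at most $1\le\mathrm{diam}(B)^{2/p}$.

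Next I transfer this bound from the reference square to the tile. Since $\tau_{h,w}$ pushes $\mathcal{L}^2$ on $(0,1)^2$ forward to $M^2\mathcal{L}^2$ on $Q_{h,w}$, the set $\{x\in Q_{h,w} \mid y^Tg(x)\in B\}$ has $M^2\mathcal{L}^2$-measure equal to the $\mathcal{L}^2$-measure of its $\tau_{h,w}$-preimage. That preimage is exactly $\{x\in(0,1)^2 \mid y^Tg_{h,w}(x)\in B\}$, so
\[
\big\vert Q_{h,w}\cap (y^Tg)^{-1}(B)\big\vert \le M^{-2}C_{h,w}\,\mathrm{diam}(B)^{\frac{2}{p}}.
\]

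Finally I sum over the $M^2$ tiles; the overlaps and boundaries are null sets and do not contribute. This yields
\[
\nu_y(B)\le \max_{h,w}C_{h,w}\,\mathrm{diam}(B)^{2/p},
\]
uniformly in $y$. Taking $B=(t_1,t_2]$ gives a uniform Hölder seminorm bound for $F_y$, and $\Vert F_y\Vert_\infty\le 1$ bounds the sup-part of the norm, so $g$ satisfies the RDC with exponent $p$. The converse direction of Theorem \ref{Thm: Equivalence} then gives $g\in\mathcal{A}(p)$.

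I expect the main obstacle to be bookkeeping rather than analysis. The direction of the measure-preserving map must be handled correctly, so that preimages under $\tau_{h,w}$ are the right objects and the factor $M^{-2}$ appears (rather than $M^2$). The exponent mismatch across tiles must also be resolved, either via the inclusion lemma or via the case split on $\mathrm{diam}(B)\le1$. Everything else is additivity of $\nu_y$ over the partition.
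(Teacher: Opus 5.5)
Your proposal is correct and follows essentially the same route as the paper. Both arguments pass to the RDC on each tile, transfer the marginal bound to $Q_{h,w}$ through $\tau_{h,w}$ with the factor $M^{-2}$, sum over the tiles, and return to the CFC via the converse direction of Theorem \ref{Thm: Equivalence}. The only differences are bookkeeping: the paper works with the interval hull $I_B$ and handles the exponent mismatch inline by the case split on $\vert I_B\vert\le 1$, arriving at the cruder constant $1+\sum_{h,w}C_{h,w}$, whereas you unify the exponents first (or split cases) and average over the $M^2$ tiles to get a maximum-type constant.
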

\begin{figure}[H]
    \centering
    \includegraphics[scale=0.75]{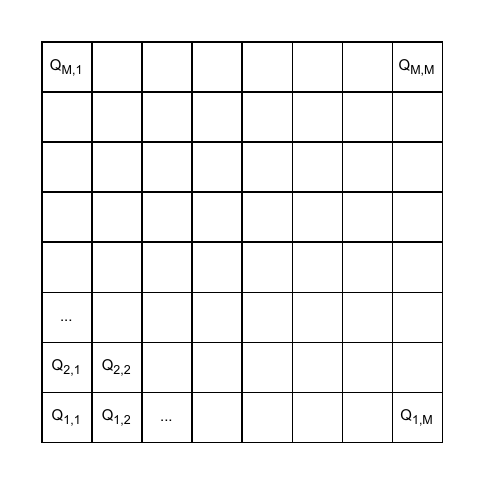}    
    \caption{Illustration of the tiling lemma.}
\end{figure}

\begin{proof}
Set $p=\max_{h,w}p_{h,w}$ and let $B \subset \mathbb{R}$ be any Borel set with the associated interval $I_B=[\inf B,\sup B]$. Let $y \in S^{N-1}$ be arbitrary. 
Then, consider the set 
\[
O:=\{x \in (0,1)^2 \mid y^Tg(x) \in I_B \}
\]
and we have 
\[
O=\bigcup_{h,w=1}^M (O \cap Q_{h,w})
\]
and since $Q_{h,w} \cap Q_{h',w'}$ is a null set for $(h,w)\neq (h',w')$, we have
\begin{align*}
\nu_y(B) &\leq \nu_y(I_B) = \vert O \vert  =\sum_{h,w=1}^{M} \vert O \cap Q_{h,w} \vert  \\
& \leq \sum_{h,w=1}^M \vert \{x \in Q_{h,w} \mid y^Tg(x) \in I_B \} \vert \\
&= \frac{1}{M^2}\sum_{h,w=1}^M \vert \{x \in (0,1)^2 \mid y^T(g\vert_{Q_{h,w}} \circ \tau_{h,w})(x) \in I_B \} \vert  \\
& \leq \frac{1}{M^2}\sum_{h,w=1}^M C_{h,w} \vert I_B \vert^{\frac{2}{p_{h,w}}}  \leq \bigg( 1+\sum_{h,w=1}^M C_{h,w} \bigg) \vert I_B \vert^{\frac{2}{p}},
\end{align*}
where for $\vert I_B \vert \leq 1$ we used $\vert I_B \vert^{\frac{2}{p_{h,w}}} \leq \vert I_B \vert^{\frac{2}{p}}$, while for $\vert I_B \vert > 1$ the bound $\vert O \vert \leq 1 \leq \vert I_B \vert^{\frac{2}{p}}$ is trivial.
Noting that $\vert I_B \vert^{\frac{2}{p}}=\mathrm{diam}(B)^{\frac{2}{p}}$ and using the same techniques as in the $(\impliedby)$ direction of the proof of Theorem \ref{Thm: Equivalence}, we see that $g$ satisfies the RDC with exponent
$p=\max_{h,w}p_{h,w}$ and constant $\tilde{C}:=\bigg( 1+\sum_{h,w=1}^M C_{h,w} \bigg)$.
\end{proof}

Our analysis, and in particular Theorem \ref{Thm: Equivalence},
reveals that the pushforward measure $\nu$ of $g$, equivalently all its marginals $\nu_y$, is the only invariant relevant for admitting a CFC. Furthermore, the equivalence
\[
\nu_y(B)=\int_{(0,1)^2} \mathbf{1}_B(y^Tg(x))\dx
\]
reveals that the marginals are occupation measures \cite{OccupationDensity}. However, in contrast to the theory of occupation measures, we do not require $\nu_y$ to come from a density $h_y$, i.e. to only have an absolutely continuous part with respect to $\mathcal{L}^1$ and no jump or Cantor part. For $N=1$ and $p = \frac{2\ln 3}{\ln 2} > 2$, taking $F$ to be the Cantor function yields an image whose pushforward measure $\nu$ consists exclusively of a Cantor part. The Fourier transform, denoted by the hat $\hat{\;}$, is a widely used technique -- not just in the theory of occupation measures -- to study the pushforward measure $\nu$ as a measure on $\mathbb{R}^N$.
To generate an abundance of images $g \in \mathcal{A}(p)$,  we can add suitable noise to any image and then use the Fourier transform of the pushforward measure of the noisy image to show that it must actually come from a density. Our noise model will be a fractional Gaussian field, which injects noise at every scale into our image $g$.

\begin{defn}[Fractional Gaussian field]\label{Def: FGF}
Let $0 < H < 1$. The \textbf{fractional Gaussian field} $\mathfrak{W} = (\mathfrak{W}_1, \dots, \mathfrak{W}_N)$ with \textbf{Hurst index} $H$ is the $\mathbb{R}^N$-valued centered Gaussian field on $\mathbb{R}^2$ whose components are i.i.d.\ with
\[
\mathfrak{W}_i(0) = 0, \qquad \mathbb{E}\big[\vert \mathfrak{W}_i(x) - \mathfrak{W}_i(x') \vert^2\big] = \vert x - x' \vert^{2H}
\]
for all $x,x' \in \mathbb{R}^2$.
\end{defn}
The theory surrounding these fields, including existence, can be found in \cite{fractionalgaussian}. In particular, we have $W:=\mathfrak{W}\vert_{(0,1)^2} \in L^{\infty}((0,1)^2,\mathbb{R}^N)$ almost surely. 

\begin{thm}[Noisy images are in $\mathcal{A}(2)$]\label{Thm: Noise makes images admissible}
Let $g \in L^{\infty}((0,1)^2,\mathbb{R}^N)$ and $W$ be the fractional Gaussian field (restricted to $(0,1)^2$) with Hurst index $H$ satisfying $NH<1$. Then, for every $\sigma>0$, we have almost surely
\[
g + \sigma W \in \mathcal{A}(2).
\]
\end{thm}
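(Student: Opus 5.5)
The plan is to prove that, almost surely, the pushforward measure $\nu$ of $\tilde g:=g+\sigma W$ on $\mathbb{R}^N$ has a bounded density $\rho$. Since $\tilde g\in L^\infty$ almost surely, $\rho$ is then supported in a ball $B_R(0)$, and this yields uniformly bounded marginal densities. For every $y\in S^{N-1}$,
\[
h_y(t)=\int_{\{z\,:\,y^Tz=t\}}\rho(z)\,\mathrm{d}\mathcal{H}^{N-1}(z)\le \Vert\rho\Vert_{L^\infty}\,\mathcal{H}^{N-1}\big(B_R(0)\cap\{y^Tz=t\}\big)\le c_N\Vert\rho\Vert_{L^\infty}R^{N-1},
\]
uniformly in $y$ and $t$. (For $N=1$ the bound is simply $h_{\pm1}\le\Vert\rho\Vert_{L^\infty}$.) Estimate \eqref{Estimate: Density gives RDC} with $p=2$ (the $L^\infty$ case) then gives the RDC with exponent $2$. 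Theorem \ref{Thm: Equivalence} converts this into the CFC with exponent $2$, i.e.\ $\tilde g\in\mathcal{A}(2)$.

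To obtain the bounded density I use the Fourier transform. It suffices to show $\hat\nu\in L^1(\mathbb{R}^N)$ almost surely, since Fourier inversion then gives a continuous density with $\Vert\rho\Vert_{L^\infty}\le(2\pi)^{-N}\Vert\hat\nu\Vert_{L^1}$. Here
\[
\hat\nu(\xi)=\int_{(0,1)^2}e^{i\xi^T\tilde g(x)}\,\mathrm{d}x .
\]
It is enough to show $\mathbb{E}\int_{\mathbb{R}^N}\vert\hat\nu(\xi)\vert\,\mathrm{d}\xi<\infty$. By Tonelli and Jensen this is bounded by $\int_{\mathbb{R}^N}(\mathbb{E}\vert\hat\nu(\xi)\vert^2)^{1/2}\,\mathrm{d}\xi$. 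Joint measurability in $(\omega,\xi)$ follows from a measurable version of $W$.

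The central computation uses the Gaussian characteristic function and the independence of the components of $\mathfrak{W}$:
\[
\mathbb{E}\vert\hat\nu(\xi)\vert^2=\int\!\!\int_{(0,1)^2\times(0,1)^2} e^{i\xi^T(g(x)-g(x'))}\,e^{-\frac{\sigma^2}{2}\vert\xi\vert^2\vert x-x'\vert^{2H}}\,\mathrm{d}x\,\mathrm{d}x' .
\]
Bounding the deterministic phase by one is exactly why $g$ can be arbitrary. Substituting $u=x-x'\in\mathbb{R}^2$ then gives
\[
\mathbb{E}\vert\hat\nu(\xi)\vert^2\le\int_{\mathbb{R}^2}e^{-\frac{\sigma^2}{2}\vert\xi\vert^2\vert u\vert^{2H}}\,\mathrm{d}u = c_H\,(\sigma\vert\xi\vert)^{-2/H}.
\]
Combined with the trivial bound $\vert\hat\nu\vert\le 1$, this yields
\[
\mathbb{E}\vert\hat\nu(\xi)\vert^2\lesssim \min\{1,(\sigma\vert\xi\vert)^{-2/H}\}.
\]
Hence $(\mathbb{E}\vert\hat\nu(\xi)\vert^2)^{1/2}\lesssim\min\{1,\vert\xi\vert^{-1/H}\}$. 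This is integrable on $\mathbb{R}^N$ precisely when $1/H>N$, which is the hypothesis $NH<1$. Therefore $\hat\nu\in L^1$ almost surely, and the argument above concludes.

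I expect the main obstacle to be this second-moment estimate together with its integrability threshold: the dimension count must come out as exactly $NH<1$ (two spatial dimensions giving decay $\vert\xi\vert^{-2/H}$, halved by the square root). The remaining points are routine. Almost-sure boundedness of $W$ on $(0,1)^2$ is quoted from \cite{fractionalgaussian}. The randomness of $R$ and $\Vert\rho\Vert_{L^\infty}$ is harmless, because the CFC only requires some finite constant for each realization.
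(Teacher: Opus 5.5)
Your proof is correct and follows the paper's overall architecture. Both use the Fourier transform of the pushforward, the Gaussian characteristic function of the increments with the deterministic phase bounded by one, an almost surely bounded density, uniform marginal bounds from $\Vert\tilde g\Vert_{L^\infty}<\infty$, and then \eqref{Estimate: Density gives RDC} with Theorem~\ref{Thm: Equivalence}. The genuine difference is how the bounded density is extracted.

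The paper integrates in $\xi$ first against the weight $\vert\xi\vert^s$. This reduces everything to the singular integral $\int\int\vert x-x'\vert^{-(s+N)H}\,\mathrm{d}x\,\mathrm{d}x'$ and gives $\mathbb{E}\Vert\nu\Vert_{H^{s/2}}^2<\infty$ for some $s\in(N,\frac{2}{H}-N)$. The paper then invokes the Sobolev embedding $H^{s/2}\hookrightarrow L^\infty$, valid for $s>N$.

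You integrate in $(x,x')$ first and obtain the pointwise decay $\mathbb{E}\vert\hat\nu(\xi)\vert^2\lesssim\min\{1,\vert\xi\vert^{-2/H}\}$. You then apply Cauchy--Schwarz in $\omega$ rather than in $\xi$ and conclude $\hat\nu\in L^1(\mathbb{R}^N)$ almost surely. Classical Fourier inversion for finite measures then gives a continuous density with $\Vert\rho\Vert_{L^\infty}\le(2\pi)^{-N}\Vert\hat\nu\Vert_{L^1}$.

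Both routes give exactly the threshold $NH<1$, because the Sobolev embedding is itself a Cauchy--Schwarz step in $\xi$ that costs the same half power of decay. Your version is more elementary: it needs neither the identification of the tempered distribution $\nu$ with an $H^{s/2}$ function nor an embedding theorem. The paper's version shows in addition that the density lies in $H^{s/2}$. Its formulation via $\int\int\eta(\vert x-x'\vert)^{-(s+N)/2}\,\mathrm{d}x\,\mathrm{d}x'$ is also what makes the subsequent remark on general increment variances $\eta$ immediate.
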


\begin{proof}
Let us set $\tilde{g}=g + \sigma W$ and we have $\tilde{g} \in L^{\infty}((0,1)^2,\mathbb{R}^N)$ almost surely. We define the Fourier transform of 
the pushforward measure of $\tilde{g}$ by
\[
\hat{\nu}(\xi)=\int_{(0,1)^2}e^{i\xi \cdot \tilde{g}(x)}\text{d}x.
\]

Recall that the characteristic function of a Gaussian random vector $Z \sim \mathcal{N}(\mu,\Sigma)$ satisfies
\[
\mathbb{E}[e^{i\xi \cdot Z}]=e^{i \xi \cdot \mu}e^{-\frac{1}{2} \xi^T \Sigma \xi},
\]
that the increments of $\tilde{g}$ satisfy
\[
\tilde{g}(x)-\tilde{g}(x') \sim \mathcal{N}(g(x)-g(x'), \sigma^2 \vert x-x' \vert^{2H}I),
\]
for $x,x' \in (0,1)^2$ and that 
\[
M_{s,N}:=\int_{\mathbb{R}^N}\vert u \vert^s e^{-\frac{1}{2} \vert u \vert^2} \text{d}u < \infty.
\]
Using these three facts and the change of variables $u=\sigma \vert x-x' \vert^{H} \xi$, we have 
\begin{align*}
 \mathbb{E} \bigg[\int_{\mathbb{R}^N}  \vert\xi\vert^s\,\big\vert\hat{\nu}(\xi)\big\vert^2\text{d}\xi \bigg]
&=\int_{\mathbb{R}^N}\vert\xi\vert^s\int_{(0,1)^2}\!\int_{(0,1)^2}\mathbb{E}\big[e^{i\xi \cdot (\tilde{g}(x)-\tilde{g}(x'))}\big]\text{d}x\text{d}x'\text{d}\xi\\
&=\int_{\mathbb{R}^N}\vert\xi\vert^s\int_{(0,1)^2}\!\int_{(0,1)^2}\underbrace{e^{i\xi \cdot (g(x)-g(x'))}}_{\vert \cdot \vert=1}\,e^{-\frac{1}{2}\sigma^2 \vert x-x'\vert^{2H}\vert\xi\vert^2}\text{d}x\text{d}x'\text{d}\xi\\
&\leq\int_{(0,1)^2}\int_{(0,1)^2}\bigg(\int_{\mathbb{R}^N}\vert\xi\vert^s\,e^{-\frac{1}{2}\sigma^2 \vert x-x'\vert^{2H}\vert\xi\vert^2}\text{d}\xi\bigg)\text{d}x\text{d}x'\\
&=\int_{(0,1)^2}\!\int_{(0,1)^2}\big(\sigma^2 \vert x-x'\vert^{2H}\big)^{-\frac{s+N}{2}}\bigg(\int_{\mathbb{R}^N}\vert u \vert^s\,e^{-\frac{1}{2}\vert u \vert^2}\text{d}u\bigg)\text{d}x\text{d}x'\\
&=\int_{(0,1)^2}\!\int_{(0,1)^2}M_{s,N}\,\big(\sigma^2 \vert x-x'\vert^{2H}\big)^{-\frac{s+N}{2}}\text{d}x\text{d}x'\\
&=\frac{M_{s,N}}{\sigma^{s+N}}\int_{(0,1)^2}\int_{(0,1)^2}\vert x-x'\vert^{-(s+N)H}\text{d}x\text{d}x'\\
&\leq C(s,\sigma,H)
\end{align*}
for some constant $C(s,\sigma,H)>0$. In the last step, we have used that the singular integral converges for $(s+N)H<2$, which we can achieve by setting 
\[
s \in (N,\frac{2}{H}-N),
\]
which is non-empty for $NH<1$. Since $\vert \hat{\nu}(\xi) \vert \leq 1$, we have that
\[
\int_{\vert \xi \vert \leq 1}(1+\vert \xi \vert^2)^{\frac{s}{2}}\vert \hat{\nu}(\xi) \vert^2\text{d}\xi
\]
is bounded by a constant depending only on $s$ and $N$, and $(1+\vert \xi \vert^2)^{\frac{s}{2}} \leq 2^{\frac{s}{2}}\vert \xi \vert^{s}$ for $\vert \xi \vert \geq 1$. We therefore have a bound on the full Sobolev $H^{\frac{s}{2}}$-norm, and Markov's inequality gives
\[
\mathbb{E} \big[ \Vert \nu \Vert_{H^{\frac{s}{2}}(\mathbb{R}^N)}^2 \big] < \infty
\implies
\Vert \nu \Vert_{H^{\frac{s}{2}}(\mathbb{R}^N)} < \infty
\]
almost surely. In particular, the tempered distribution $\nu$ is represented by a function $h \in H^{\frac{s}{2}}(\mathbb{R}^N)$, i.e.
\[
\langle \nu, \varphi \rangle = \int_{\mathbb{R}^N} h\varphi\,\text{d}x \qquad \text{for all } \varphi \in \mathcal{S}(\mathbb{R}^N).
\]
Equivalently, $\nu$ is absolutely continuous with respect to the Lebesgue measure with density $h$, for which we write $\nu = h\text{d}x$. 
Since $s>N$, i.e. $\frac{s}{2}>\frac{N}{2}$, the Sobolev embedding then yields for some constant $C>0$
\[
\Vert h \Vert_{L^{\infty}(\mathbb{R}^N)} \leq C \Vert h \Vert_{H^{\frac{s}{2}}(\mathbb{R}^N)} < \infty.
\]
Then, we have almost surely that 
\begin{itemize}
    \item $W$ is bounded and 
    \item the pushforward measure of $\tilde{g}$ admits a bounded density $h$.
\end{itemize} 
Consequently, the marginal densities $h_y$ of $\nu_y$ satisfy, for every $y \in S^{N-1}$ and almost every $t \in \mathbb{R}$,
\begin{align*}
\vert h_y(t) \vert &= \int_{ \{ y^Tx=t \}} h(x) \text{d}\mathcal{H}^{N-1}(x)
 \leq \Vert h \Vert_{L^{\infty}(\mathbb{R}^N)} \mathcal{H}^{N-1}\bigg( \{ y^Tx=t \} \cap B_{\tilde{R}}(0) \bigg) \\
 &\leq C \Vert h \Vert_{H^{\frac{s}{2}}(\mathbb{R}^N)} \mathcal{H}^{N-1}\bigg( \{x \in \mathbb{R}^{N-1} \mid \vert x \vert \leq 1 \}\bigg) \tilde{R}^{N-1}  < \infty,
\end{align*}
where $B_{\tilde{R}}(0):=\{x \in \mathbb{R}^N \mid \vert x \vert \leq \tilde{R} \}$ is chosen such that $\tilde{g}((0,1)^2) \subset B_{\tilde{R}}(0) $.
Since the bound is independent of $y \in S^{N-1}$, we can take the supremum and applying the inequality chain \eqref{Estimate: Density gives RDC} gives the RDC with exponent $p=2$.
\end{proof}
The only properties of the Gaussian field $W$ that the proof uses are boundedness and the increments $\mathbb{E}\big[|W_i(x) - W_i(x')|^2 \big] = \eta(|x-x'|)$, which enters as $e^{-\frac{1}{2}\sigma^2 \eta(|x-x'|)|\xi|^2}$. After the change of variables $u = \sigma \eta(|x-x'|)^{\frac{1}{2}}\xi$, the argument goes through verbatim for any positive $\eta$ (of course provided that the associated Gaussian field exists) such that 
\[\int_{(0,1)^2}\int_{(0,1)^2} \eta(|x-x'|)^{-\frac{s+N}{2}}\,\mathrm{d}x\,\mathrm{d}x' < \infty
\] 
for some $s > N$. For $\eta(\alpha) = \alpha^{2H}$ this is precisely $NH < 1$, so the restriction on the Hurst index is a consequence of our specific noise model and avoids more complicated integral evaluations, such as for the Matérn covariance.

As a corollary of the previous theorem, we get that images satisfying the CFC are dense in $L^{\infty}((0,1)^2,\mathbb{R}^N)$. In particular, the set is 
non-empty.

\begin{cor}[Density of non-degenerate images]\label{Thm: Admissible maps are dense - noise version}
We have
\[
\overline{\mathcal{A}(2)}^{L^{\infty}} = L^{\infty}((0,1)^2,\mathbb{R}^N),
\]
where $\overline{\mathcal{A}(2)}^{L^{\infty}}$ denotes the closure of $\mathcal{A}(2)$ in $L^{\infty}((0,1)^2,\mathbb{R}^N)$.
\end{cor}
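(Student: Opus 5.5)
Given $g \in L^{\infty}((0,1)^2,\mathbb{R}^N)$ and $\epsilon>0$, we need an element of $\mathcal{A}(2)$ within $L^\infty$-distance $\epsilon$ of $g$. The plan is to take a noisy version $g+\sigma W$ from Theorem \ref{Thm: Noise makes images admissible} and make the perturbation $\sigma W$ small in $L^\infty$. There is a subtlety. Theorem \ref{Thm: Noise makes images admissible} only gives membership in $\mathcal{A}(2)$ almost surely. The $L^\infty$-norm of $W$ is finite almost surely but not uniformly bounded over realizations. So we must pick a realization that has both properties.

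First fix a Hurst index $H$ with $NH<1$, e.g. $H=\frac{1}{2N}$, so that Theorem \ref{Thm: Noise makes images admissible} applies for every $\sigma>0$. Since $\Vert W\Vert_{L^\infty((0,1)^2)}<\infty$ almost surely, continuity of probability gives $\mathbb{P}(\Vert W\Vert_{L^\infty}\le R)\to 1$ as $R\to\infty$. Hence there is some $R<\infty$ with $\mathbb{P}(\Vert W\Vert_{L^\infty}\le R)>0$ (indeed $\geq \frac12$).

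Now set $\sigma:=\epsilon/R$. The event $\{g+\sigma W\in\mathcal{A}(2)\}$ has probability one by Theorem \ref{Thm: Noise makes images admissible}. Its intersection with $\{\Vert W\Vert_{L^\infty}\le R\}$ therefore has positive probability, and in particular is nonempty. Choose a realization $\omega$ in this intersection and put $\tilde g:=g+\sigma W(\omega)$. Then $\tilde g\in\mathcal{A}(2)$ and $\Vert \tilde g-g\Vert_{L^\infty}=\sigma\Vert W(\omega)\Vert_{L^\infty}\le \epsilon$. Since $\epsilon$ was arbitrary, $g$ lies in the closure of $\mathcal{A}(2)$. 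The reverse inclusion is trivial because $\mathcal{A}(2)\subset L^\infty$.

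The only step needing care is the measurability of the events involved, in particular $\{g+\sigma W\in\mathcal{A}(2)\}$. This can be avoided by using directly the full-measure event constructed in the proof of Theorem \ref{Thm: Noise makes images admissible}: $W$ bounded and $\Vert\nu\Vert_{H^{s/2}}<\infty$. That event is measurable as a sublevel set of an expectation-controlled random variable, and it implies membership in $\mathcal{A}(2)$. Alternatively, one can work with the completion of the probability space. Either way, the argument only requires some realization in a positive-probability event.

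As a byproduct, nonemptiness of $\mathcal{A}(2)$ follows by applying this with $g=0$.
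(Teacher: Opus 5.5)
Your proof is correct and follows the same route as the paper: you perturb $g$ by $\sigma W$ with $NH<1$, invoke Theorem \ref{Thm: Noise makes images admissible}, and select a realization that avoids the $\sigma$-dependent null sets. The only difference is in how the realization is selected. The paper fixes one $\omega$ in the countable intersection of the full-measure events for $\sigma=\tfrac1n$ and uses $\tfrac1n\Vert W(\omega)\Vert_{L^\infty}\to 0$, so finiteness of $\Vert W(\omega)\Vert_{L^\infty}$ for that single $\omega$ suffices and your auxiliary bound $R$ is not needed; your measurability remark addresses a point the paper leaves implicit.
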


\begin{proof}
Take any fractional Gaussian field $W$ with Hurst index $H$ satisfying $NH<1$. By the previous Theorem \ref{Thm: Noise makes images admissible}, we have
\[
\mathbb{P}(g+ \tfrac{1}{n} W \in \mathcal{A}(2))=1
\]
for every $n \in \mathbb{N}$. Hence
\[
\mathbb{P}\bigg( \bigcap_{n \in \mathbb{N}} \{ g+ \tfrac{1}{n} W \in \mathcal{A}(2) \} \bigg)=1.
\]
Fix any such $\omega \in \Omega$ in the underlying probability space $\Omega$. Hence,
\[
\Vert g- (g + \tfrac{1}{n}W(\omega)) \Vert_{L^{\infty}((0,1)^2,\mathbb{R}^N)} \leq \frac{1}{n}\Vert W(\omega) \Vert_{L^{\infty}((0,1)^2,\mathbb{R}^N)} \to 0
\]
as $n \to \infty $. This gives us $\overline{\mathcal{A}(2)}^{L^{\infty}} \supset L^{\infty}((0,1)^2,\mathbb{R}^N)$, while the other inclusion is covered by the definition of $\mathcal{A}(2)$.
\end{proof}

The corollary above also shows that $\mathcal{A}(p)$ is not closed in the $L^{\infty}$ topology for any $p \geq 2$, as we can approximate any constant function arbitrarily well, but it does not contain any open set in the $L^{\infty}$ topology.

We would like to close this section by noting that if we take $X \sim U((0,1)^2)$ to be a uniformly distributed random variable on $(0,1)^2$, we can adopt a probabilistic viewpoint of the CFC/RDC by studying the random variable $Z=g(X)$ via a quantitative growth bound on its Lévy concentration function \cite{ConcentrationFunctions} $Q(y^TZ,r):= \sup_{t \in \mathbb{R}} \mathbb{P}(y^TZ \in [t,t+r])$ for $r>0$, i.e.
\[
\sup_{y \in S^{N-1}}Q(y^TZ,r) \leq Cr^{\frac{2}{p}}
\]
for some constant $C>0$. As in  \cite{Mourtada_2022}, we use this as a condition on $g$, but we actually show that any image $g$ with suitable noise has a quantitative (depending on the scale) covariance floor and hence satisfies this condition.
However, images are not random variables in spectral space and the underlying spatial-spectral structure of $g \in L^{\infty}((0,1)^2,\mathbb{R}^N)$ matters when we use the Gaussian Mumford--Shah model to find a segmentation of the image. While the data term is invariant under spatial re-arrangements, the perimeter term is not. None of the three similar concepts -- Frostman's lemma, occupation measures and Lévy concentration --
requires uniformity over all directions $y \in S^{N-1}$ for $N>1$.

\section{Lower bounds for the Gaussian Mumford--Shah model}
For this section, we consider the set
\[
X:=P_k \times (\mathbb{R}^N)^k \times \SPD(N)^k.
\]
We equip $P_k$ with the $L^1$-topology, i.e. for each $1 \leq l \leq k$, we identify each $O_l \subset (0,1)^2$ (up to null sets) with its characteristic function $\mathbf{1}_{O_l}$. We endow $P_k$ with the metric 
\[
d_{P_k}(\mathcal{O},\mathcal{O}'):=\sum_{l=1}^k \Vert \mathbf{1}_{O_l}- \mathbf{1}_{O_l'} \Vert_{L^1((0,1)^2)} \quad \text{for } \mathcal{O},\mathcal{O}' \in P_k.
\]
We equip $\SPD(N)$ with the metric coming from the Frobenius norm $\vert \cdot \vert_F$ and $\mathbb{R}^N$ with the standard Euclidean metric.
Furthermore, we note that if $\vert O_l \vert=0$, we can set $(\mu_l,\Sigma_l)$ to any value without changing the energy of the Gaussian Mumford--Shah model $E$ \eqref{Def: GMS}. We choose $m_{O_l}=0$ and $C_{O_l}=I$ in this case.

In order to find lower bounds for the GMS, we would like to use the fact that the indicator functions $f(g(x) \mid \mu_l,\Sigma_l)$ ($1 \leq l \leq k$) of the Gaussian Mumford--Shah model are negative log-likelihoods of multivariate Gaussians $\mathcal{N}(\mu_l,\Sigma_l)$ instead of arguing from a purely optimization/multivariable calculus perspective. As a consequence, we need the following subset of the space $\mathcal{P}(\mathbb{R}^N)$ of probability distributions on $\mathbb{R}^N$ coming from a positive density
\[
\mathcal{P}_{++}(\mathbb{R}^N):=\bigg\{Q \in \mathcal{P}(\mathbb{R}^N) \bigg\vert \frac{\text{d}Q}{\text{d}\mathcal{L}^N} \in C^0(\mathbb{R}^N,(0,\infty)) \bigg\},
\]
i.e. having a strictly positive Radon--Nikodym derivative.
\begin{defn}[Information theoretic quantities]
For $Q \in \mathcal{P}_{++}(\mathbb{R}^N)$ with density $q=\frac{\text{d}Q}{\text{d}\mathcal{L}^N}$ and $R \in \mathcal{P}(\mathbb{R}^N)$, we define the \textbf{entropy} by 
\[
H(Q):=-\int_{\mathbb{R}^N} q(z) \ln q(z)\text{d}z,
\]
the \textbf{cross entropy} by 
\[
H(R,Q):=- \int_{\mathbb{R}^N} \ln q(z) \text{d}R(z),
\]
and if in addition we have $R \in \mathcal{P}_{++}(\mathbb{R}^N)$ with density $r=\frac{\text{d}R}{\text{d}\mathcal{L}^N}$, the \textbf{Kullback--Leibler divergence} by
\[
\mathrm{KL}(R \mid Q):= H(R,Q)-H(R)=\int_{\mathbb{R}^N}r(z) \ln \frac{r(z)}{q(z)}\text{d}z.
\]
\end{defn}
The following proposition relates lower bounds of the Gaussian Mumford--Shah model cleanly to information theoretic quantities:  

\begin{prop}[Information theoretic lower bound]\label{Prop: Information theoretic lower bound}
Let $g \in \mathcal{A}(p)$ for $p \geq 2$. Then, for the Gaussian Mumford--Shah model \eqref{Def: GMS}, we have
\begin{equation}\label{Inequality: Information theoretic lower bound}
\begin{split}
E[\mathcal{O},\mu,\Sigma] &\geq \sum_{l=1}^k \bigg( \vert O_l \vert H(\mathcal{N}(m_{O_l},C_{O_l}))+ \lambda \mathrm{Per}(O_l,(0,1)^2) \bigg) \\
& = \sum_{l=1}^k \bigg( \frac{1}{2} \vert O_l \vert \ln \big( (2 \pi e)^N \det C_{O_l} \big) + \lambda \mathrm{Per}(O_l,(0,1)^2) \bigg)
\end{split}
\end{equation}
for all $(\mathcal{O},\mu,\Sigma) \in X$ with equality if $\mu=(m_{O_1}, \dots,m_{O_k})$
 and $\Sigma=(C_{O_1}, \dots,C_{O_k})$.
\end{prop}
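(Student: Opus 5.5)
The plan is to argue region by region, since both the perimeter terms and the sum over $l$ are untouched by the choice of $(\mu,\Sigma)$. I fix $l$ with $\vert O_l \vert>0$; the case $\vert O_l\vert=0$ contributes zero on both sides of \eqref{Inequality: Information theoretic lower bound}. It then suffices to show
\[
\int_{O_l} f(g(x)\mid \mu_l,\Sigma_l)\dx \geq \frac{1}{2}\vert O_l\vert \ln\big((2\pi e)^N\det C_{O_l}\big),
\]
with equality when $(\mu_l,\Sigma_l)=(m_{O_l},C_{O_l})$. Because $g\in\mathcal{A}(p)$, the CFC gives $C_{O_l}\succeq C\vert O_l\vert^p I\succ 0$. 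Hence $C_{O_l}\in\SPD(N)$, and both $\mathcal{N}(m_{O_l},C_{O_l})$ and $\ln\det C_{O_l}$ are well defined. This is the only point where the hypothesis enters.

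I would use the information theoretic framing. Let $R_l$ be the normalized restricted pushforward $\frac{1}{\vert O_l\vert}g_\sharp(\mathcal{L}^2\vert_{O_l})$ and $Q=\mathcal{N}(\mu_l,\Sigma_l)\in\mathcal{P}_{++}(\mathbb{R}^N)$. Then $\frac{1}{\vert O_l\vert}\int_{O_l} f\dx = H(R_l,Q)$. $R_l$ need not have a density, so I avoid the KL divergence. Instead I use that the Gaussian log-density is quadratic: $H(R_l,Q)$ depends on $R_l$ only through its first two moments $m_{O_l}$ and $C_{O_l}$. Writing $g-\mu_l=(g-m_{O_l})+(m_{O_l}-\mu_l)$, the cross term integrates to zero, and the trace identity $\int z^T S^{-1}z\,\mathrm{d}R = \tr(S^{-1}\,\mathrm{Cov})$ gives
\[
H(R_l,Q)=\tfrac12\tr(\Sigma_l^{-1}C_{O_l})+\tfrac12\vert m_{O_l}-\mu_l\vert^2_{\Sigma_l}+\tfrac12\ln\det\Sigma_l+\tfrac N2\ln 2\pi = H(\mathcal{N}(m_{O_l},C_{O_l}),Q).
\]

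Minimizing over $(\mu_l,\Sigma_l)$ is the main step, though it is standard. The Mahalanobis term is nonnegative and vanishes at $\mu_l=m_{O_l}$. For the rest, set $A=C_{O_l}^{1/2}\Sigma_l^{-1}C_{O_l}^{1/2}\succ0$ with eigenvalues $a_i>0$. Then
\[
\tfrac12\tr(\Sigma_l^{-1}C_{O_l})+\tfrac12\ln\det\Sigma_l-\tfrac12\ln\det C_{O_l}=\tfrac12\sum_{i}(a_i-\ln a_i)\geq \tfrac N2,
\]
using $a-\ln a\geq1$ with equality iff $a=1$. Equivalently, this is $\mathrm{KL}(\mathcal{N}(m_{O_l},C_{O_l})\mid Q)\geq0$, which is legitimate because both measures are in $\mathcal{P}_{++}$. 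Therefore $H(R_l,Q)\geq \tfrac12\ln\det C_{O_l}+\tfrac N2(1+\ln2\pi)=\tfrac12\ln((2\pi e)^N\det C_{O_l})$. The right-hand side is exactly $H(\mathcal{N}(m_{O_l},C_{O_l}))$, and equality holds when $\Sigma_l=C_{O_l}$ and $\mu_l=m_{O_l}$.

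Finally, I multiply by $\vert O_l\vert$, add $\lambda\mathrm{Per}(O_l,(0,1)^2)$, and sum over $l$ to obtain both lines of \eqref{Inequality: Information theoretic lower bound} and the equality case. The only delicate point is ensuring $C_{O_l}$ is invertible; without that the right-hand side is $-\infty$ or undefined. The CFC handles this uniformly for every $\vert O_l\vert>0$.
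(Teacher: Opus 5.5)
Your proposal is correct and takes essentially the same route as the paper. Both replace the normalized pushforward by $\mathcal{N}(m_{O_l},C_{O_l})$, using that the Gaussian log-density is quadratic, and then use $\mathrm{KL}(\mathcal{N}(m_{O_l},C_{O_l}) \mid \mathcal{N}(\mu_l,\Sigma_l))\geq 0$, with the CFC entering only to guarantee $C_{O_l}\succ 0$. The one difference is that you check the KL inequality by hand, via the eigenvalues of $C_{O_l}^{1/2}\Sigma_l^{-1}C_{O_l}^{1/2}$ and $a-\ln a\geq 1$, whereas the paper simply invokes nonnegativity of the KL divergence.
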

\begin{proof}
Let $(\mathcal{O},\mu,\Sigma) \in X$ be arbitrary and consider all $O_l \subset (0,1)^2$ with $\vert O_l \vert >0$. We note that the normalized pushforward measures 
$P_l=\frac{1}{\vert O_l \vert}g_{\sharp}\big(\mathcal{L}^2\big\vert_{O_l}\big)$ are probability measures with finite mean $m_{O_l}$ 
and finite positive definite covariance $C_{O_l} \succ 0$, where positive definiteness is guaranteed by $g \in \mathcal{A}(p)$. Let $q(z \mid \mu_l, \Sigma_l)=\frac{1}{(2\pi)^{\frac{N}{2}}}\frac{1}{(\det \Sigma_l)^{\frac{1}{2}}} e^{-\frac{1}{2}\vert z- \mu_l \vert^2_{\Sigma_l}}$ denote the density of a multivariate Gaussian. \linebreak Since $-\ln q(z \mid \mu_l, \Sigma_l)$ is a quadratic polynomial, the cross entropy $H(P_l,\mathcal{N}(\mu_l,\Sigma_l))=-\int_{\mathbb{R}^N} \ln q(z \mid \mu_l, \Sigma_l) \text{d}P_l(z)$ depends on $P_l$ only through its mean and covariance. Therefore, we have 
\begin{equation*}
\begin{split}
H(P_l,\mathcal{N}(\mu_l,\Sigma_l))&=H(\mathcal{N}(m_{O_l},C_{O_l}),\mathcal{N}(\mu_l,\Sigma_l)) \\
&=H(\mathcal{N}(m_{O_l},C_{O_l}))+\underbrace{\mathrm{KL}(\mathcal{N}(m_{O_l},C_{O_l}) \mid \mathcal{N}(\mu_l,\Sigma_l))}_{\geq 0} \\
&\geq H(\mathcal{N}(m_{O_l},C_{O_l}))=\frac{1}{2}\ln \big( (2 \pi e)^N \det C_{O_l} \big).
\end{split}
\end{equation*}
We note that $\mathrm{KL}(\mathcal{N}(m_{O_l},C_{O_l}) \mid \mathcal{N}(\mu_l,\Sigma_l))$ vanishes exactly if $\mu_l=m_{O_l}$ and $\Sigma_l=C_{O_l}$. Recalling that $f(z \mid \mu_l, \Sigma_l)=-\ln q(z \mid \mu_l, \Sigma_l)$, we can rewrite the energy $E$ \eqref{Def: GMS} using the weighted pushforward measure $P_l$
\begin{align*}
E[\mathcal{O},\mu,\Sigma]&=\sum_{l=1}^k \bigg( -\int_{O_l} \ln q(g(x) \mid \mu_l, \Sigma_l)\dx + \lambda \mathrm{Per}(O_l,(0,1)^2) \bigg) \\
&=\sum_{l=1}^k \bigg( - \vert O_l \vert \int_{\mathbb{R}^N} \ln q(z \mid \mu_l, \Sigma_l)\text{d}P_l(z) + \lambda \mathrm{Per}(O_l,(0,1)^2) \bigg) \\
&=\sum_{l=1}^k \bigg( \vert O_l \vert H(P_l,\mathcal{N}(\mu_l,\Sigma_l)) + \lambda \mathrm{Per}(O_l,(0,1)^2) \bigg) \\
&\geq \sum_{l=1}^k \bigg( \vert O_l \vert H(\mathcal{N}(m_{O_l},C_{O_l})) + \lambda \mathrm{Per}(O_l,(0,1)^2) \bigg) \\
&=\sum_{l=1}^k \bigg( \vert O_l \vert  \int_{\mathbb{R}^N}f(z \mid m_{O_l},C_{O_l}) q(z \mid m_{O_l}, C_{O_l}) \text{d}z + \lambda \mathrm{Per}(O_l,(0,1)^2) \bigg) \\
&= \sum_{l=1}^k \bigg( \frac{1}{2} \vert O_l \vert \ln \big( (2 \pi e)^N \det C_{O_l} \big)  + \lambda \mathrm{Per}(O_l,(0,1)^2) \bigg).
\end{align*}
\end{proof}
Note that in the lower bound \eqref{Inequality: Information theoretic lower bound}, we have profiled out the statistical variables $(\mu,\Sigma)$, and we are therefore left with a lower bound only depending on the partition. The inequality in \eqref{Inequality: Information theoretic lower bound} shows that the energy of the Gaussian Mumford--Shah model is always bounded from below by a sum of entropies times segment size. In fact, if we choose $\mu_l=m_{O_l}$ and $\Sigma_l=C_{O_l}$ for $1 \leq l \leq k$ as a default, the inequality in \eqref{Inequality: Information theoretic lower bound} becomes sharp as $\mathrm{KL}(\mathcal{N}(m_{O_l},C_{O_l}) \mid \mathcal{N}(m_{O_l},C_{O_l}))=0$ and minimizing the Gaussian Mumford--Shah model is equivalent to minimizing entropy times segment size per segment plus a regularizer. However, even though these two forms have the same minimizing partition $\mathcal{O}$, computing this minimizing partition with $(\mu_l,\Sigma_l)=(m_{O_l},C_{O_l})$ for $1 \leq l \leq k$ tied to the partition blocks us from using convex relaxations \cite{ConvexificationChoices} to approximate minimizing partitions numerically.

\begin{thm}[Lower bounds]\label{Thm: Lower bounds}
Let $g \in \mathcal{A}(p)$ for $p \geq 2$. Then, there exists a constant $K_0>0$ such that the energy $E$ \eqref{Def: GMS} of the Gaussian Mumford--Shah model is bounded below, i.e.
\begin{equation}\label{Lower bound}
E[\mathcal{O},\mu,\Sigma] \geq -K_0 + \lambda \sum_{l=1}^k \mathrm{Per}(O_l,(0,1)^2) \geq -K_0  
\end{equation}  
for all $(\mathcal{O},\mu,\Sigma) \in X$. 
\end{thm}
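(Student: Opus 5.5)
The plan is to start from Proposition~\ref{Prop: Information theoretic lower bound}, which has already profiled out $(\mu,\Sigma)$:
\[
E[\mathcal{O},\mu,\Sigma] \geq \sum_{l=1}^k \Big( \tfrac12 \vert O_l\vert \ln\big((2\pi e)^N \det C_{O_l}\big) + \lambda \mathrm{Per}(O_l,(0,1)^2)\Big).
\]
The perimeter terms are kept as they are. It then suffices to bound each data contribution $\frac12\vert O_l\vert \ln((2\pi e)^N\det C_{O_l})$ from below by a constant independent of $O_l$. Segments with $\vert O_l\vert=0$ contribute nothing, so we only treat $\vert O_l\vert>0$.

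For such a segment, the CFC with exponent $p$ and constant $C>0$ gives $C_{O_l}\succeq C\vert O_l\vert^p I$. Hence every eigenvalue of $C_{O_l}$ is at least $C\vert O_l\vert^p$, and
\[
\ln\det C_{O_l} \geq N\ln C + Np\ln\vert O_l\vert .
\]
Writing $a=\vert O_l\vert\in(0,1]$, the data contribution is therefore at least
\[
\tfrac N2\, a\ln(2\pi e C) + \tfrac{Np}{2}\, a\ln a .
\]
The first term is bounded below by $-\frac N2\vert\ln(2\pi e C)\vert$, since $a\le 1$. The second is bounded below by $-\frac{Np}{2e}$, because $a\ln a\geq -1/e$ on $(0,1]$.

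This is the step where the condition is actually used. Without a floor, $\ln\det C_{O_l}$ could go to $-\infty$ at a fixed positive $\vert O_l\vert$. The polynomial-in-$\vert O_l\vert$ floor only lets it degenerate logarithmically, and the prefactor $\vert O_l\vert$ absorbs that through the boundedness of $a\ln a$. I expect no real obstacle beyond this observation. One should just double-check that no separate treatment of small segments is needed, and it is not, since $a\ln a\to 0$.

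Summing over the $k$ segments gives the estimate with
\[
K_0 := k\Big(\tfrac N2 \vert \ln(2\pi e C)\vert + \tfrac{Np}{2e}\Big),
\]
which is positive and depends only on $k,N,p$ and the CFC constant. The final inequality $\geq -K_0$ then follows because the perimeters are nonnegative.
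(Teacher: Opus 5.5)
Your proposal is correct and follows essentially the same route as the paper: the information-theoretic bound, then the CFC eigenvalue floor $C_{O_l} \succeq C\vert O_l\vert^p I$, then $a\ln a \geq -1/e$. The only difference is that you bound the $\frac{N}{2}\vert O_l\vert\ln(2\pi e C)$ term segment by segment using $\vert O_l\vert \leq 1$, which costs an extra factor $k$. The paper instead uses $\sum_{l}\vert O_l\vert = 1$ and obtains the slightly sharper constant $K_0=\frac{N}{2}\vert\ln(2\pi e C)\vert + k\frac{pN}{2e}$.
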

\begin{proof}
Let $g \in \mathcal{A}(p)$ for $p \geq 2$. Let us introduce the function 
\begin{equation}\label{Def: psi-function t*ln(t)}
\psi:[0,1] \to \mathbb{R}, \quad t \mapsto \begin{cases}
t \ln t &\text{ for } 0 < t \leq 1 \\
0 & \text{ for } t=0 \end{cases},
\end{equation}
which is continuous on $[0,1]$ and bounded below by $-\frac{1}{e}$.
We then use the information theoretic lower bound from the previous Proposition \ref{Prop: Information theoretic lower bound} and, since $g \in \mathcal{A}(p)$, there exists $C>0$ such that $C_{O_l} \succeq C \vert O_l \vert^p I$ for all $1 \leq l \leq k$, and we estimate
\begin{align*}
E[\mathcal{O},\mu,\Sigma] &\geq \sum_{l=1}^k \bigg( \frac{1}{2} \vert O_l \vert \ln \big( (2 \pi e)^N \det C_{O_l} \big)  + \lambda \mathrm{Per}(O_l,(0,1)^2) \bigg) \\
&\geq \sum_{l=1}^k \bigg( \frac{1}{2} \vert O_l \vert \ln \big( (2 \pi e)^N \det (C \vert O_l \vert^p I) \big)  + \lambda \mathrm{Per}(O_l,(0,1)^2) \bigg) \\
&=\sum_{l=1}^k \bigg( \frac{N\ln(2\pi e C)}{2} \vert O_l \vert + \frac{pN}{2} \vert O_l \vert  \ln \vert O_l \vert  + \lambda \mathrm{Per}(O_l,(0,1)^2) \bigg) \\
&\geq - \frac{N}{2} \vert \ln(2\pi e C) \vert -k\frac{pN}{2e}+ \lambda \sum_{l=1}^k \mathrm{Per}(O_l,(0,1)^2) \\
&\geq -K_0+\lambda \sum_{l=1}^k \mathrm{Per}(O_l,(0,1)^2) \geq -K_0
\end{align*}
for $K_0=\frac{N}{2} \vert \ln(2\pi e C) \vert +k\frac{pN}{2e}>0$, since $\psi(\vert O_l \vert)=\vert O_l \vert \ln \vert O_l \vert \geq -\frac{1}{e}$ and $\sum_{l=1}^k \vert O_l \vert=1$ by virtue of being a partition. 
\end{proof}
The proof consumes the covariance floor $C\vert O\vert^p I$ only through the term $\vert O_l\vert \ln\vert O_l\vert$. Any non-decreasing $\phi:(0,1]\to(0,\infty)$ with $C_O \succeq \phi(\vert O\vert) I$ for all measurable $O\subset(0,1)^2$ with $\vert O\vert>0$ and
\[
\inf_{0<s\le 1} s\ln\phi(s) > -\infty
\]
ensures the existence of a lower bound. In Theorem \ref{Thm: Equivalence}, this would translate to a modulus of continuity condition on the cumulative distributions.

We will postpone the existence proof to the next section, namely, Theorem \ref{Thm: Convergence of minima}, to use \enquote{unified} technical lemmas and propositions for both $E^{(\epsilon)}$ and $E$, even though we could now apply the direct method of the calculus of variations \cite{CoVRindler}.

\section{Applications}
We can now use non-trivial but standard analysis to prove powerful theorems, where the biggest missing ingredient was the lower bound given by Theorem \ref{Thm: Lower bounds}. 

\subsection{Existence theory and convergence rates}
For any $\mathcal{O} \in P_k$, let us consider the set of active labels
\[
A(\mathcal{O}):=\{l \in \{1,\dots,k \} \mid \vert O_l \vert >0 \}.
\]
We also use the shorthand notation $S:=(\mathbb{R}^N)^k \times \SPD(N)^k$ for the domain of the statistical variables. It is now convenient to profile out the statistical variables, as we are mainly interested in the minimizing partitions of the GMS and pGMS. 

\begin{lem}[Closed forms for partition dependent energy]\label{Lem: Partition dependant energies}
Let $g \in \mathcal{A}(p)$ for $p \geq 2$ and $\epsilon \in [0,1]$. Then, the \textbf{reduced energies} $\tilde{E}^{(\epsilon)}$
satisfy 
\begin{align*}
\tilde{E}^{(\epsilon)}[\mathcal{O}]:&=\inf_{(\mu,\Sigma) \in S}E^{(\epsilon)}[(\mathcal{O},\mu,\Sigma)] \\
&=\sum_{l \in A(\mathcal{O})} \bigg( \frac{\vert O_l \vert}{2}\ln \big((2\pi e)^N \det(C_{O_l}+\epsilon I) \big) + \lambda \mathrm{Per}(O_l,(0,1)^2)  \bigg),
\end{align*}
where $\tilde{E}=\tilde{E}^{(0)}$, with the infimum being uniquely attained at $(\hat{\mu}^{(\epsilon)},\hat{\Sigma}^{(\epsilon)}) \in S$ given by 
\[
\hat{\mu}_l^{(\epsilon)}=m_{O_l} \quad \text{and} \quad \hat{\Sigma}_l^{(\epsilon)}=C_{O_l}+\epsilon I
\]
for $l \in A(\mathcal{O})$ and by any $(\hat{\mu}_l^{(\epsilon)},\hat{\Sigma}_l^{(\epsilon)}) \in \mathbb{R}^N \times \SPD(N)$ for $l \notin A(\mathcal{O})$. Furthermore,
\[
\inf_{(\mathcal{O},\mu,\Sigma) \in X } E^{(\epsilon)}[(\mathcal{O},\mu,\Sigma)]=\inf_{\mathcal{O} \in P_k} \tilde{E}^{(\epsilon)}[\mathcal{O}].
\]
\end{lem}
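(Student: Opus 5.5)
The energy splits into a sum over labels. The perimeter terms do not depend on $(\mu,\Sigma)$, and the statistical variables of different labels do not interact. So the infimum over $S$ factorizes into $k$ independent minimizations of
\[
\Phi_l(\mu_l,\Sigma_l):=\int_{O_l} f(g(x)\mid\mu_l,\Sigma_l)\dx+\frac{\epsilon}{2}\vert O_l\vert\,\mathrm{tr}\,\Sigma_l^{-1}.
\]
For $l\notin A(\mathcal{O})$ this quantity is identically zero, which explains both why such labels drop out of the sum and why any choice of $(\hat\mu_l,\hat\Sigma_l)$ is optimal there.

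For $l\in A(\mathcal{O})$, I would follow the proof of Proposition \ref{Prop: Information theoretic lower bound}. Expanding the quadratic with $g-\mu_l=(g-m_{O_l})+(m_{O_l}-\mu_l)$ kills the cross term, since $\int_{O_l}(g-m_{O_l})\dx=0$. Using $\int_{O_l}(g-m_{O_l})^TA(g-m_{O_l})\dx=\vert O_l\vert\,\mathrm{tr}(AC_{O_l})$ then gives
\[
\frac{\Phi_l}{\vert O_l\vert}=\frac12\vert m_{O_l}-\mu_l\vert^2_{\Sigma_l}+\frac12\mathrm{tr}\big(\Sigma_l^{-1}(C_{O_l}+\epsilon I)\big)+\frac12\ln\det\Sigma_l+\frac N2\ln 2\pi .
\]
The first term is nonnegative and vanishes exactly when $\mu_l=m_{O_l}$. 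Set $D:=C_{O_l}+\epsilon I$; it is positive definite because $g\in\mathcal{A}(p)$ gives $C_{O_l}\succ0$, which matters when $\epsilon=0$. The remaining expression equals
\[
\mathrm{KL}\big(\mathcal{N}(0,D)\mid\mathcal{N}(0,\Sigma_l)\big)+\tfrac12\ln\big((2\pi e)^N\det D\big).
\]
One can also see this directly: with $M=\Sigma_l^{-1/2}D\Sigma_l^{-1/2}$, the expression reduces to the elementary inequality $\tfrac12\sum_i(\lambda_i-\ln\lambda_i-1)\ge0$ over the eigenvalues $\lambda_i$ of $M$, with equality iff $M=I$. So the minimum is attained uniquely at $\Sigma_l=D$, with value $\tfrac{\vert O_l\vert}{2}\ln\big((2\pi e)^N\det(C_{O_l}+\epsilon I)\big)$. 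Summing over $l$ and adding the perimeters yields the closed form.

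The last identity is the usual fact that an iterated infimum equals the joint infimum: $\inf_X E^{(\epsilon)}=\inf_{\mathcal{O}}\inf_{(\mu,\Sigma)}E^{(\epsilon)}$.

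The only real care point is the $\epsilon=0$ case, where attainment needs $C_{O_l}\in\SPD(N)$; this is supplied by the CFC. Without it, the infimum in $\Sigma_l$ is $-\infty$ and is not attained.
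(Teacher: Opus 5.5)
Your proposal is correct and follows the paper's proof: you split the energy by label, complete the square to get the same expression in $(\mu_l,\Sigma_l)$, and minimize it by the Gaussian maximum-likelihood argument, with the CFC supplying $C_{O_l}\succ 0$ when $\epsilon=0$; the paper only cites that last argument, whereas you prove it via the eigenvalue inequality $\lambda-\ln\lambda-1\geq 0$. The one detail the paper states and you leave implicit is that $\vert O_l\vert=0$ forces $\mathrm{Per}(O_l,(0,1)^2)=0$; this is what makes inactive labels drop out of the perimeter sum as well as the data term, so that the sum in the closed form runs only over $A(\mathcal{O})$.
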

\begin{proof}
For $l \in A(\mathcal{O})$, completing the square gives 
\begin{align*}
&\int_{O_l}f(g(x) \mid \mu_l,\Sigma_l)\dx+\frac{\epsilon}{2}\vert O_l \vert \mathrm{tr}  \Sigma_l^{-1}  \\
=&\;\frac{1}{2}\vert O_l \vert \bigg( \mathrm{tr}(\Sigma_l^{-1}(C_{O_l}+\epsilon I))+ \vert m_{O_l}-\mu_l \vert_{\Sigma_l}^2+ \ln \det \Sigma_l + N \ln (2\pi)\bigg),
\end{align*}
which is minimized precisely at $\mu_l=m_{O_l}$ and $\Sigma_l=C_{O_l}+ \epsilon I \succ 0$ using the same arguments as for the log-likelihood of a multivariate Gaussian \cite{AndersenMultivariate}. This holds trivially for $\epsilon>0$, and for $\epsilon=0$ we use $g \in \mathcal{A}(p)$. For $l \notin A(\mathcal{O})$, the $l$-th summand of $E^{(\epsilon)}$ vanishes for every $(\mu_l,\Sigma_l)$. We also have $\vert O_l \vert=0 \implies \mathrm{Per}(O_l,(0,1)^2)=0$. Substituting this into $E^{(\epsilon)}$ yields the desired identity and the equality of the infima follows, since the infimum over the statistical variables $(\mu,\Sigma) \in S$ is actually attained, uniquely so in the variables of the active labels.
\end{proof}

We state the main theorem of this section, with $E^{(0)}:=E$, which is of course closely related to $\Gamma$-convergence, denoted by $\GC$. We refer to \cite{Braides} for all the necessary definitions, lemmas and background.

\begin{thm}[Convergence of minima]\label{Thm: Convergence of minima}
Let $g \in \mathcal{A}(p)$ for $p \geq 2$. For the energies \eqref{Def: pGMS} with $E^{(0)}:=E$ the following holds: 
\begin{itemize}
    \item[(i)](Existence of minimizers) For every $\epsilon \in [0,1]$, there exists a minimizer $\hat{x}^{(\epsilon)}=(\hat{\mathcal{O}}^{(\epsilon)},\hat{\mu}^{(\epsilon)},\hat{\Sigma}^{(\epsilon)}) \in X$, i.e.
            \begin{equation}\label{Eq: Existence of minimizers for all epsilon}
            \inf_{x \in X} E^{(\epsilon)}[x]=E^{(\epsilon)}[\hat{x}^{(\epsilon)}].
            \end{equation}
    \item[(ii)](Convergence rate) For all $\epsilon \in (0,1]$ and all $\mathcal{O} \in P_k$, we have
              \begin{equation}\begin{split}\label{Eq: Convergence of energy}
            \bigg\vert \tilde{E}^{(\epsilon)}[\mathcal{O}]- \tilde{E}[\mathcal{O}] \bigg\vert \leq \vert A(\mathcal{O}) \vert L \epsilon^{\frac{1}{p}} &\to 0 \\
             \text{and} \quad \bigg\vert \min_{\mathcal{O} \in P_k}\tilde{E}^{(\epsilon)}[\mathcal{O}]- \min_{\mathcal{O} \in P_k} \tilde{E}[\mathcal{O}] \bigg\vert \leq k L \epsilon^{\frac{1}{p}} &\to 0
            \end{split}\end{equation}
            as $\epsilon \to 0$ with constant $L>0$ independent of $\epsilon$ and $\mathcal{O}$.
    \item[(iii)](Convergence of minimizers)  Every family of minimizers $(\hat{x}^{(\epsilon)})_{\epsilon \in (0,1]}$ has a subsequence $\epsilon_j \to 0$ such that for a minimizer $\hat{x}=(\hat{\mathcal{O}},\hat{\mu},\hat{\Sigma}) \in X$ of $E$ we have
\begin{equation}\begin{split}\label{Eq: Convergence of the partitions and mean/cov for active labels}
   &d_{P_k}(\hat{\mathcal{O}}^{(\epsilon_j)},\hat{\mathcal{O}}) \to 0, \text{ and } \\
   &\vert \hat{\mu}_l^{(\epsilon_j)}-\hat{\mu}_l \vert + \vert \hat{\Sigma}_l^{(\epsilon_j)}-\hat{\Sigma}_l \vert_F \to 0 \text{ for all } l \in A(\hat{\mathcal{O}}),
\end{split}\end{equation}
as $j \to \infty$.
\end{itemize}
\end{thm}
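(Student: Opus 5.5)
The plan is to work throughout with the reduced energies $\tilde E^{(\epsilon)}$ of Lemma \ref{Lem: Partition dependant energies}. By that lemma, minimizing $E^{(\epsilon)}$ over $X$ is equivalent to minimizing $\tilde E^{(\epsilon)}$ over $P_k$, and a minimizing partition is lifted to a minimizer of $E^{(\epsilon)}$ by setting $\hat\mu_l=m_{O_l}$ and $\hat\Sigma_l=C_{O_l}+\epsilon I$. I would prove (ii) first, since it is purely algebraic and is reused in (iii), and then treat (i) and (iii) by the direct method in the $L^1$ topology on $P_k$.

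For (ii), I fix $\mathcal O$ and an active label $l\in A(\mathcal O)$. Then
\[
\tilde E^{(\epsilon)}[\mathcal O]-\tilde E[\mathcal O]=\sum_{l\in A(\mathcal O)}\frac{\vert O_l\vert}{2}\ln\det\big(I+\epsilon C_{O_l}^{-1}\big)\ge 0 .
\]
The CFC gives $C_{O_l}^{-1}\preceq (C\vert O_l\vert^p)^{-1}I$, so each summand is at most $\frac{N}{2}\vert O_l\vert\ln\big(1+\epsilon C^{-1}\vert O_l\vert^{-p}\big)$. Next I use the elementary inequality $\ln(1+x)\le \frac{1}{\alpha}x^\alpha$ for $x\ge0$ and $\alpha\in(0,1]$, with $\alpha=1/p$ (admissible since $p\ge 2$). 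This bounds the summand by $\frac{Np}{2}\vert O_l\vert\,\epsilon^{1/p}C^{-1/p}\vert O_l\vert^{-1}=\frac{Np}{2C^{1/p}}\epsilon^{1/p}$. The factor $\vert O_l\vert$ cancels exactly, so we may take $L=\frac{Np}{2C^{1/p}}$ independently of $\mathcal O$. Summing over at most $k$ active labels gives the first estimate. Since $0\le\tilde E^{(\epsilon)}-\tilde E\le kL\epsilon^{1/p}$ holds uniformly on $P_k$, the difference of the infima obeys the same bound. Once (i) is established these infima are minima, which gives the second estimate.

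For (i), I take a minimizing sequence $\mathcal O^{(n)}$ for $\tilde E^{(\epsilon)}$. Theorem \ref{Thm: Lower bounds}, together with $\tilde E^{(\epsilon)}\ge\tilde E$, gives
\[
\lambda\sum_l\mathrm{Per}(O_l^{(n)},(0,1)^2)\le \tilde E^{(\epsilon)}[\mathcal O^{(n)}]+K_0,
\]
which is bounded because the infimum is finite (compare with the trivial partition $((0,1)^2,\emptyset,\dots)$). BV compactness yields an $L^1$-convergent subsequence of indicators. The limits are still indicators summing to $1$ a.e., hence a partition in $P_k$, and the perimeter is lower semicontinuous. The key step, which I expect to be the main obstacle, is to show that the data term $\mathcal O\mapsto\frac{\vert O_l\vert}{2}\ln((2\pi e)^N\det(C_{O_l}+\epsilon I))$ is continuous along such sequences, including when labels vanish in the limit. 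There are two cases.
\begin{itemize}
\item If $\vert O_l\vert>0$ in the limit, then $m_{O_l^{(n)}}$ and $C_{O_l^{(n)}}$ converge, because they are integrals of the bounded functions $g$ and $g\otimes g$ against $L^1$-convergent indicators, divided by measures bounded away from $0$.
\item If $\vert O_l^{(n)}\vert\to0$, the term must tend to $0$, which is the contribution of an inactive label. The upper bound uses $C_{O}\preceq\Vert g\Vert_\infty^2 I$, giving at most $\frac{N}{2}\vert O_l^{(n)}\vert\ln(2\pi e(\Vert g\Vert_\infty^2+1))\to0$. The lower bound uses the CFC, giving at least $\frac{N}{2}\vert O_l^{(n)}\vert\ln(2\pi e C\vert O_l^{(n)}\vert^p)$, and this tends to $0$ because $t\ln t\to0$.
\end{itemize}
This second case is exactly where $g\in\mathcal A(p)$ is needed when $\epsilon=0$. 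Hence $\tilde E^{(\epsilon)}$ is lower semicontinuous, the limit partition is a minimizer, and lifting it via Lemma \ref{Lem: Partition dependant energies} gives $\hat x^{(\epsilon)}$.

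For (iii), I first bound the minimal values uniformly: $\min\tilde E^{(\epsilon)}\le\tilde E^{(1)}$ of the trivial partition. Combined with the lower bound, this gives perimeters of $\hat{\mathcal O}^{(\epsilon)}$ bounded uniformly in $\epsilon$. By compactness there is a sequence $\epsilon_j\to0$ with $\hat{\mathcal O}^{(\epsilon_j)}\to\hat{\mathcal O}$ in $d_{P_k}$. Lower semicontinuity of $\tilde E$ from the previous step, together with $\tilde E\le\tilde E^{(\epsilon)}$ and (ii), then gives
\[
\tilde E[\hat{\mathcal O}]\le\liminf_j\tilde E^{(\epsilon_j)}[\hat{\mathcal O}^{(\epsilon_j)}]=\liminf_j\min\tilde E^{(\epsilon_j)}=\min\tilde E ,
\]
so $\hat{\mathcal O}$ is a minimizer. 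Setting $\hat\mu_l=m_{\hat O_l}$ and $\hat\Sigma_l=C_{\hat O_l}$, the convergence of the statistics for $l\in A(\hat{\mathcal O})$ follows from the first case above, since $\vert\hat O^{(\epsilon_j)}_l\vert\to\vert\hat O_l\vert>0$ and $\epsilon_jI\to0$.
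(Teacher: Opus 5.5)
Your proposal is correct and follows essentially the paper's proof. You reduce to $\tilde E^{(\epsilon)}$, obtain compactness from the lower bound plus BV compactness, get lower semicontinuity from the same two-case continuity of the data term, and derive the rate from the CFC eigenvalue bound. The only differences are minor: you bound $\sup_{u>0}\ln(1+u)/u^{1/p}$ explicitly by $p$, and in (iii) you prove the fundamental theorem of $\Gamma$-convergence by hand via the uniform estimate (ii), instead of citing the $\Gamma$-convergence proposition and Braides' Theorem 1.21.
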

One may alternatively extend $E$ to positive semi-definite $\Sigma_l$ on empty segments (i.e. $\vert O_l \vert = 0$). After a further extraction of a subsequence, the minimizers then converge in all components of the extended space, with respect to the same topology. Since the limiting statistical variables of vanishing segments are arbitrary and energetically irrelevant, we prefer the formulation above.

Lower semi-continuity of the energies is also a property tied to $g$ having a covariance floor:
\begin{lem}[Lower semi-continuity]\label{Lem: LSC of the energies}
Let $g \in \mathcal{A}(p)$. Then, for $\epsilon \in [0,1]$, the energies $\tilde{E}^{(\epsilon)}$ are lower semi-continuous 
on $(P_k,d_{P_k})$.
\end{lem}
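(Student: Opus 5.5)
Take a sequence $\mathcal{O}^{(n)} \to \mathcal{O}$ in $(P_k,d_{P_k})$, i.e. $\mathbf{1}_{O_l^{(n)}} \to \mathbf{1}_{O_l}$ in $L^1((0,1)^2)$ for every $l$. The goal is
\[
\liminf_{n\to\infty} \tilde{E}^{(\epsilon)}[\mathcal{O}^{(n)}] \geq \tilde{E}^{(\epsilon)}[\mathcal{O}].
\]
By Lemma \ref{Lem: Partition dependant energies}, $\tilde{E}^{(\epsilon)}$ is a sum over labels of a data term
\[
D_\epsilon(O):=\frac{\vert O\vert}{2}\ln\big((2\pi e)^N\det(C_O+\epsilon I)\big), \qquad D_\epsilon(O):=0 \text{ if } \vert O\vert=0,
\]
plus $\lambda\mathrm{Per}(O,(0,1)^2)$. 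The perimeter part is lower semi-continuous under $L^1$ convergence by the classical result, and a liminf of sums dominates the sum of liminfs. It therefore suffices to show that $D_\epsilon$ is lower semi-continuous, and in fact I will show it is continuous along $O^{(n)}\to O$ in $L^1$. I split into two cases according to whether $\vert O\vert>0$ or $\vert O\vert=0$.

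\emph{Case $\vert O\vert>0$.} Since $\vert O^{(n)}\vert\to\vert O\vert$ and $g\in L^\infty$, dominated convergence gives
\[
\int_{O^{(n)}} g\dx \to \int_O g\dx, \qquad \int_{O^{(n)}} g\otimes g\dx \to \int_O g\otimes g\dx,
\]
with errors bounded by $\Vert g\Vert_\infty\Vert\mathbf{1}_{O^{(n)}}-\mathbf{1}_O\Vert_{L^1}$ and its square analogue. Hence $m_{O^{(n)}}\to m_O$ and $C_{O^{(n)}}\to C_O$. The limit satisfies $C_O+\epsilon I\succeq C\vert O\vert^pI\succ 0$ for $\epsilon=0$, using $g\in\mathcal{A}(p)$, and trivially for $\epsilon>0$. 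Since $\ln\det$ is continuous on $\SPD(N)$, we get $D_\epsilon(O^{(n)})\to D_\epsilon(O)$.

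\emph{Case $\vert O\vert=0$, the main obstacle.} Here the target value is $0$, so I need $\liminf_n D_\epsilon(O^{(n)})\geq 0$; only the subsequence with $\vert O^{(n)}\vert>0$ matters. The logarithm could blow up to $-\infty$, but it is multiplied by $\vert O^{(n)}\vert\to0$. I control both sides of $\ln\det$.
\begin{itemize}
\item \emph{Lower bound:} the covariance floor gives $\det(C_{O^{(n)}}+\epsilon I)\geq (C\vert O^{(n)}\vert^p)^N$, so
\[
D_\epsilon(O^{(n)}) \geq \frac{N\ln(2\pi e C)}{2}\vert O^{(n)}\vert + \frac{pN}{2}\,\psi(\vert O^{(n)}\vert).
\]
\item \emph{Upper bound:} $C_{O^{(n)}}\preceq \Vert g\Vert_\infty^2 I$ (for instance via Popoviciu), so $\ln\det(C_{O^{(n)}}+\epsilon I)\leq N\ln(\Vert g\Vert_\infty^2+1)$.
\end{itemize}
Thus $D_\epsilon(O^{(n)})$ is squeezed between two quantities that tend to $0$, because $\psi(t)=t\ln t\to 0$ as $t\to0$. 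Hence $D_\epsilon(O^{(n)})\to 0 = D_\epsilon(O)$.

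This is exactly where the CFC is essential: without it, a vanishing segment carrying a degenerate covariance could contribute a negative amount in the limit and break lower semi-continuity. The same estimate makes the result uniform for $\epsilon\in[0,1]$.

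Combining the two cases over all labels $l$, including those with $l\notin A(\mathcal{O})$ whose contribution is $0$, and adding the lower semi-continuous perimeter terms gives the claim.
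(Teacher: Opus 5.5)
Your proposal is correct and follows the paper's proof essentially step for step. You use the same split into active labels (convergence of $m_{O}$ and $C_{O}$, then continuity of $\ln\det$ at the non-degenerate limit) and vanishing labels (the two-sided squeeze from the covariance floor with $\psi(t)=t\ln t\to 0$ and a uniform upper bound on $C_O$), followed by $L^1$ lower semi-continuity of the perimeter. The only cosmetic difference is your sharper upper bound $C_O\preceq \Vert g\Vert_{L^{\infty}}^2 I$ in place of the paper's $4\Vert g\Vert_{L^{\infty}}^2 I$, which changes nothing.
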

\begin{proof}
Let $d_{P_k}(\mathcal{O}^{(j)},\mathcal{O}) \to 0$ and write, for a single label,
\[
D_l^{(\epsilon)}[\mathcal{O}^{(j)}]:=\frac{\vert O_l^{(j)} \vert}{2}\ln \big((2\pi e)^N \det(C_{O_l^{(j)}}+\epsilon I) \big).
\]

\textbf{Case 1 ($l \in A(\mathcal{O})$):} In this case, we have $\vert O_l^{(j)} \vert \geq \frac{\vert O_l \vert}{2} >0$ for $j$ large. Since $g$ and $g \otimes g$ are bounded,
\[
\bigg\vert \int_{O_l^{(j)}} g(x) \dx - \int_{O_l} g(x) \dx \bigg\vert \leq \Vert g \Vert_{L^{\infty}} \Vert \mathbf{1}_{O_l^{(j)}}-\mathbf{1}_{O_l} \Vert_{L^1((0,1)^2)},
\]
and analogously for $g\otimes g$ and the measures, which gives $m_{O_l^{(j)}}\to m_{O_l}$ and $C_{O_l^{(j)}} \to C_{O_l}\succeq C \vert O_l \vert^p I \succ 0$. Since $\det$ is continuous and the limit is non-singular, we get $D_l^{(\epsilon)}[\mathcal{O}^{(j)}] \xrightarrow{j \to \infty} D_l^{(\epsilon)}[\mathcal{O}]$. \\

\textbf{Case 2 ($l \notin A(\mathcal{O})$):}
We can use the inequality 
\[
C \vert O_l^{(j)} \vert^p I \preceq C_{O_l^{(j)}}+ \epsilon I \preceq(4 \Vert g \Vert_{L^{\infty}}^2+1)I,
\]
the function $\psi$ \eqref{Def: psi-function t*ln(t)} and the monotonicity of $\ln \det$ with respect to the Loewner order to deduce that 
\begin{align*}
\frac{N \ln (2\pi e C)}{2} \vert O_l^{(j)} \vert+  \frac{Np}{2}\psi(\vert O_l^{(j)} \vert)
\leq 
D_l^{(\epsilon)}[\mathcal{O}^{(j)}]
\leq \frac{N \ln \big( 2 \pi e (4 \Vert g \Vert_{L^{\infty}}^2+1) \big)}{2}  \vert O_l^{(j)} \vert.
\end{align*}
We note that both sides tend to $0$ as $\vert O_l^{(j)} \vert \to \vert O_l \vert=0$ and neither depends on $\epsilon$. Hence $D_l^{(\epsilon)}[\mathcal{O}^{(j)}] \to 0$, which is the contribution of the label $l$ in the limit. \\

The two previous cases combined give that the data term $\sum_{l} D_l^{(\epsilon)}[\cdot]$ is continuous for all $\epsilon \in [0,1]$ with respect to $(P_k,d_{P_k})$. The perimeter term is lower semi-continuous with respect to the $L^1$-topology \cite{ambrosio2000functions} and hence it is also lower semi-continuous on $(P_k,d_{P_k})$. As a result, $\tilde{E}^{(\epsilon)}$ is lower semi-continuous.
\end{proof}

We immediately get 
\begin{prop}[$\Gamma$-convergence of the pGMS]\label{Prop: GC of pGMS}
Let $g \in \mathcal{A}(p)$. We have 
\[
\GC_{\epsilon \to 0} \tilde{E}^{(\epsilon)}=\tilde{E}
\]
on $(P_k,d_{P_k})$.
\end{prop}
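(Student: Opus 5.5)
We verify the two defining inequalities of $\Gamma$-convergence on $(P_k,d_{P_k})$, using the closed forms from Lemma \ref{Lem: Partition dependant energies} and the lower semi-continuity from Lemma \ref{Lem: LSC of the energies}. The key structural fact is monotonicity in $\epsilon$. Since $C_{O_l}+\epsilon I \succeq C_{O_l}$ and $\ln\det$ is monotone with respect to the Loewner order, we have for every $\mathcal{O}\in P_k$ and $\epsilon\in[0,1]$
\[
\tilde{E}^{(\epsilon)}[\mathcal{O}] \geq \tilde{E}[\mathcal{O}],
\]
and $\epsilon\mapsto \tilde{E}^{(\epsilon)}[\mathcal{O}]$ is non-decreasing. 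Moreover, for fixed $\mathcal{O}$ and each active label $l\in A(\mathcal{O})$, the matrix $C_{O_l}\succeq C\vert O_l\vert^pI\succ 0$ is non-singular because $g\in\mathcal{A}(p)$. Continuity of $\det$ then gives the pointwise limit $\tilde{E}^{(\epsilon)}[\mathcal{O}]\to\tilde{E}[\mathcal{O}]$ as $\epsilon\to 0$; the perimeter term does not depend on $\epsilon$, and inactive labels contribute zero.

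For the liminf inequality, let $\epsilon_j\to 0$ and $\mathcal{O}^{(j)}\to\mathcal{O}$ in $d_{P_k}$. Monotonicity gives
\[
\liminf_{j\to\infty}\tilde{E}^{(\epsilon_j)}[\mathcal{O}^{(j)}] \geq \liminf_{j\to\infty}\tilde{E}[\mathcal{O}^{(j)}] \geq \tilde{E}[\mathcal{O}],
\]
where the last step is Lemma \ref{Lem: LSC of the energies} with $\epsilon=0$. Here the covariance floor enters essentially: it guarantees that $\tilde{E}$ is finite on all of $P_k$ and that labels whose measure vanishes in the limit contribute zero, as in Case 2 of that lemma.

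For the limsup inequality, we take the constant recovery sequence $\mathcal{O}^{(j)}=\mathcal{O}$. The pointwise convergence above gives $\lim_j\tilde{E}^{(\epsilon_j)}[\mathcal{O}]=\tilde{E}[\mathcal{O}]$. If $\mathcal{O}$ has infinite perimeter, both sides are $+\infty$ and the inequality is trivial. Alternatively, one can invoke the general fact \cite{Braides} that a sequence decreasing pointwise to a lower semi-continuous limit $\Gamma$-converges to that limit. As $\epsilon\to 0$, $\tilde{E}^{(\epsilon)}$ decreases to $\tilde{E}$, which is lower semi-continuous by Lemma \ref{Lem: LSC of the energies}.

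The only delicate point is the pointwise convergence at $\epsilon=0$. This would fail if some $C_{O_l}$ were singular, since then $\ln\det C_{O_l}=-\infty$. The CFC excludes this, so I expect no real obstacle; the argument is short given the two lemmas.
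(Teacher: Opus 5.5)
Your proposal is correct and follows the paper's proof essentially verbatim: the liminf inequality comes from the pointwise bound $\tilde{E}^{(\epsilon_j)} \geq \tilde{E}$ together with Lemma \ref{Lem: LSC of the energies} at $\epsilon=0$, and the recovery sequence is the constant one, which works because $C_{O_l} \succ 0$ for every active label. Your remark about infinite perimeter is vacuous, since $P_k$ contains only finite-perimeter partitions. The alternative via a pointwise decreasing family with a lower semi-continuous limit is also a valid shortcut.
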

\begin{proof}
Let $\epsilon_j \to 0$ be any sequence. \\
\textbf{Step 1 ($\liminf$-inequality):} Take any sequence $\{ \mathcal{O}^{(j)} \} \subset P_k$ with $d_{P_k}(\mathcal{O}^{(j)},\mathcal{O}) \to 0$. By monotonicity of $\ln \det$ with respect to the Loewner order, we have $\tilde{E}^{(\epsilon_j)} \geq \tilde{E}$ pointwise, and hence
\[
\liminf_{j \to \infty } \tilde{E}^{(\epsilon_j)}[\mathcal{O}^{(j)}] \geq \liminf_{j \to \infty } \tilde{E}[\mathcal{O}^{(j)}] 
\geq \tilde{E}[\mathcal{O}],
\]
due to the previous Lemma \ref{Lem: LSC of the energies}, which proves the $\liminf$-inequality. \\
\textbf{Step 2 (recovery sequence):} For any $\mathcal{O} \in P_k$, the constant sequence $\mathcal{O}^{(j)}:=\mathcal{O}$ serves as a recovery sequence: labels $l \notin A(\mathcal{O})$ contribute zero to both $\tilde{E}^{(\epsilon_j)}[\mathcal{O}]$ and $\tilde{E}[\mathcal{O}]$, while for $l \in A(\mathcal{O})$ we have $C_{O_l}\succ 0$ and hence the (pointwise) convergence of 
\[
\lim_{\epsilon \to 0} \ln \det (C_{O_l}+\epsilon I)=\ln \det C_{O_l}.
\]
Summing over $l=1,\dots,k$ gives $\lim_{j \to \infty} \tilde{E}^{(\epsilon_j)}[\mathcal{O}]=\tilde{E}[\mathcal{O}]$. 

These two conditions imply $\Gamma$-convergence \cite{Braides}, i.e. $\GC_{\epsilon \to 0}\tilde{E}^{(\epsilon)}=\tilde{E}$.
\end{proof}

We are now ready to prove the main Theorem \ref{Thm: Convergence of minima}:
\begin{proof}
Let $\mathcal{O}^{\flat}=((0,1)^2,\emptyset, \dots,\emptyset)$ and $\tilde{E}^{\flat}:= \vert \tilde{E}^{(1)}[\mathcal{O}^{\flat}] \vert $ and $K_0>0$ the constant from Theorem \ref{Thm: Lower bounds}. Then, we set
\begin{equation}\label{Def: Set K (equi)}
K:= \bigg\{ \mathcal{O} \in P_k \bigg\vert \sum_{l=1}^k \mathrm{Per}(O_l,(0,1)^2)\leq \frac{\tilde{E}^{\flat}+K_0+1}{\lambda}  \bigg\},    
\end{equation}
which is compact in $(P_k,d_{P_k})$ because of the compact embedding $BV((0,1)^2) \subset \subset L^1((0,1)^2)$ \cite{ambrosio2000functions}. The partition constraints are also preserved by $L^1$-convergence and the perimeter bound is preserved by the lower semi-continuity of the perimeter under $L^1$-convergence. Using $\tilde{E}^{(\epsilon)} \geq \tilde{E}$ for $\epsilon>0$ together with Theorem \ref{Thm: Lower bounds}, i.e. $\tilde{E}^{(\epsilon)}[\mathcal{O}] \geq -K_0 + \lambda \sum_{l=1}^k \mathrm{Per}(O_l,(0,1)^2)$, we obtain
\[
\bigg\{ \mathcal{O} \in P_k \bigg\vert \tilde{E}^{(\epsilon)}[\mathcal{O}] \leq  \tilde{E}^{(\epsilon)}[\mathcal{O}^{\flat}] +1 \bigg\} \subset K
\]
and $K$ does not depend on $\epsilon$. For every $\epsilon \in [0,1]$, we can therefore choose a minimizing sequence $(\mathcal{O}^{(\epsilon,n)})_{n \in \mathbb{N}} \subset K$ of $\tilde{E}^{(\epsilon)}$,
which, by compactness, has a subsequence with limit $\hat{\mathcal{O}}^{(\epsilon)} \in K$. Using the lower semi-continuity of $\tilde{E}^{(\epsilon)}$, i.e. $\tilde{E}^{(\epsilon)}[\hat{\mathcal{O}}^{(\epsilon)}] \leq \liminf_{n \to \infty} \tilde{E}^{(\epsilon)}[\mathcal{O}^{(\epsilon,n)}]=\inf_{\mathcal{O} \in P_k}\tilde{E}^{(\epsilon)}[\mathcal{O}]$, and the statistical variables $(\hat{\mu}^{(\epsilon)},\hat{\Sigma}^{(\epsilon)})$ as in Lemma \ref{Lem: Partition dependant energies}, we infer that
\[
\inf_{x \in X} E^{(\epsilon)}[x]= \inf_{\mathcal{O} \in P_k} \tilde{E}^{(\epsilon)}[\mathcal{O}]=\tilde{E}^{(\epsilon)}[\hat{\mathcal{O}}^{(\epsilon)}]=E^{(\epsilon)}[\hat{x}^{(\epsilon)}]
\]
for $\hat{x}^{(\epsilon)}=(\hat{\mathcal{O}}^{(\epsilon)}, \hat{\mu}^{(\epsilon)}, \hat{\Sigma}^{(\epsilon)})$, which is the first statement \eqref{Eq: Existence of minimizers for all epsilon}. \\
To prove the convergence rate for any $\mathcal{O} \in P_k$, which in particular holds for $\hat{\mathcal{O}}^{(\epsilon)}$, we note by definition and Lemma \ref{Lem: Partition dependant energies},
\[
\tilde{E}^{(\epsilon)}[\mathcal{O}]-\tilde{E}[\mathcal{O}]=\sum_{l \in A(\mathcal{O})} \frac{\vert O_l \vert}{2} \ln \frac{\det (C_{O_l}+ \epsilon I)}{\det C_{O_l}}.
\]
Using the assumption $g \in \mathcal{A}(p)$, we note that $C_{O_l}$ is invertible and all eigenvalues $\kappa_{l,1}, \dots,\kappa_{l,N}$ are bounded below by $C \vert O_l \vert^p$. Hence
\begin{equation*}
\begin{split}
\tilde{E}^{(\epsilon)}[\mathcal{O}]-\tilde{E}[\mathcal{O}]&= \sum_{l \in A(\mathcal{O})} \frac{\vert O_l \vert}{2} \sum_{i=1}^N \ln \bigg(1+\frac{\epsilon}{\kappa_{l,i}} \bigg) \\
& \leq  \sum_{l \in A(\mathcal{O})} \underbrace{\frac{N \vert O_l \vert}{2} \ln\bigg(1+\frac{\epsilon}{C \vert O_l \vert^p}\bigg)}_{:=\varphi_{\epsilon}(\vert O_l \vert)}.
\end{split}
\end{equation*}
Substituting $u_l=\frac{\epsilon}{C \vert O_l \vert^p}$, we have $\varphi_{\epsilon}(\vert O_l \vert)= \frac{N}{2C^{\frac{1}{p}}} \epsilon^{\frac{1}{p}} \frac{\ln(1+u_l)}{u_l^{\frac{1}{p}}}$. Since $p>1$, we have $0<M_p=\sup_{u>0} \frac{\ln(1+u)}{u^{\frac{1}{p}}} < \infty$, and hence for $L=M_p \frac{N}{2C^{\frac{1}{p}}}>0$
\[
\bigg\vert \tilde{E}^{(\epsilon)}[\mathcal{O}]-\tilde{E}[\mathcal{O}] \bigg\vert \leq \sum_{l \in A(\mathcal{O})} \varphi_{\epsilon}(\vert O_l \vert) \leq \vert A(\mathcal{O}) \vert L \epsilon^{\frac{1}{p}},
\]
which is precisely \eqref{Eq: Convergence of energy}. \\
Lastly, we see that the compact set $K$ is independent of $\epsilon \in [0,1]$ and satisfies 
\[
\min_{\mathcal{O} \in P_k}\tilde{E}^{(\epsilon)}[\mathcal{O}]=\min_{\mathcal{O} \in K}\tilde{E}^{(\epsilon)}[\mathcal{O}],
\]
which makes the family $\tilde{E}^{(\epsilon)}$ equi-coercive. Together with $\GC_{\epsilon \to 0} \tilde{E}^{(\epsilon)}=\tilde{E}$ from Proposition \ref{Prop: GC of pGMS} and Theorem 1.21 from \cite{Braides}, we get the existence of a convergent subsequence $\hat{\mathcal{O}}^{(\epsilon_j)}$ (not relabeled) with limit $\hat{\mathcal{O}} \in K$, i.e. $d_{P_k}(\hat{\mathcal{O}}^{(\epsilon_j)},\hat{\mathcal{O}}) \to 0$ as $j \to \infty$, and that $\hat{\mathcal{O}}$ is a minimizer of $\tilde{E}$. 
The statistical variables $(\hat{\mu},\hat{\Sigma}) \in S$ are again re-introduced via Lemma \ref{Lem: Partition dependant energies} (with $\epsilon=0$), which yields a minimizer $\hat{x}=(\hat{\mathcal{O}},\hat{\mu},\hat{\Sigma}) \in X$ of $E$. This is the first part of \eqref{Eq: Convergence of the partitions and mean/cov for active labels}. For the second part, note that for any $l \in A(\hat{\mathcal{O}})$, we have $\vert \hat{O}_l^{(\epsilon_j)} \vert \geq \frac{\vert \hat{O}_l \vert }{2}>0$ for $j$ large, and by Lemma \ref{Lem: Partition dependant energies}, the unique statistical variables are given by $\hat{\mu}_l^{(\epsilon_j)}=m_{\hat{O}_l^{(\epsilon_j)}}$ and $\hat{\Sigma}_l^{(\epsilon_j)}=C_{\hat{O}_l^{(\epsilon_j)}}+\epsilon_j I$, which converge by continuity w.r.t. the partitions $\hat{\mathcal{O}}^{(\epsilon_j)}$ as in Lemma \ref{Lem: LSC of the energies}. The conclusion then follows from the same arguments as in Case 1 of the proof of Lemma \ref{Lem: LSC of the energies}. Now that we have proven the existence of minimizers,  the second part of \eqref{Eq: Convergence of energy} follows from the first part of \eqref{Eq: Convergence of energy} and $\vert A(\cdot) \vert \leq k$.
\end{proof}
We would like to close out this section by remarking that for $0 <\epsilon \leq 1$, the existence of minimizers for $E^{(\epsilon)}$ only requires $g \in L^{\infty}((0,1)^2,\mathbb{R}^N)$. We can apply the direct method and note that
\begin{equation}\label{Eq: Lower bound for pGMS}
E^{(\epsilon)}[(\mathcal{O},\mu,\Sigma)] \geq \tilde{E}^{(\epsilon)}[\mathcal{O}] \geq-\frac{N}{2} \ln \frac{1}{2 \pi e \epsilon} + \lambda \sum_{l=1}^k \mathrm{Per}(O_l,(0,1)^2)
\end{equation}
holds for all $(\mathcal{O},\mu,\Sigma) \in X$ and replaces the lower bound from Theorem \ref{Thm: Lower bounds}. The CFC becomes obsolete for $\epsilon>0$ and is replaced by the fact that $C_{O_l}+\epsilon I \succeq \epsilon I \succ 0$ (see Lemma \ref{Lem: Partition dependant energies}). The rest of the argument is similar to the proof of Theorem \ref{Thm: Convergence of minima}.

\subsection{Convergence of alternating minimization}
In order to compute minimizers of $E$ \eqref{Def: GMS} and $E^{(\epsilon)}$ \eqref{Def: pGMS} numerically, we need to use a convex relaxation, in our case Zach's convexification \cite{Zach}. As mentioned before, this is also the main reason why we do not substitute the closed form for $(\mu,\Sigma) \in S$ right away, as it would make the convexification impossible or ill-defined.

For the Zach convexification, we need the unit simplex 
\[
\Delta^k:= \bigg\{z \in \mathbb{R}^k \bigg\vert z_l \geq 0 \text{ for all }1 \leq l \leq k, \sum_{l=1}^k z_l=1   \bigg\}.
\]
We denote by $\Vert f \Vert_{\mathrm{TV}((0,1)^2)}$ the total variation (denoted by $V(f,(0,1)^2)$ in \cite{ambrosio2000functions}) of a function $f \in L^1_{\mathrm{loc}}((0,1)^2)$. We then define the set of soft labels by
\[
U:=\bigg\{ u: (0,1)^2 \to \Delta^k \bigg\vert u \text{ measurable}, \sum_{l=1}^k \Vert u_l \Vert_{\mathrm{TV}((0,1)^2)}< \infty   \bigg\}.
\]
Again, it is notationally convenient to introduce the set of active  labels
\[
A(u):=\{l \in \{1,\dots,k \} \mid \Vert u_l \Vert_{L^1} >0 \}.
\]
The corresponding convex relaxations are
\begin{equation}\label{Eq: Convex relaxations}
\begin{split}
E_{\mathrm{conv}}[u,\mu,\Sigma]&:= \sum_{l=1}^k \bigg( \int_{(0,1)^2}u_l(x)f(g(x) \mid \mu_l,\Sigma_l) \dx + \lambda \Vert u_l \Vert_{\mathrm{TV}((0,1)^2)}   \bigg) \\
E_{\mathrm{conv}}^{(\epsilon)}[u,\mu,\Sigma]&:=E_{\mathrm{conv}}[u,\mu,\Sigma]+ \frac{\epsilon}{2}  \sum_{l=1}^k \Vert u_l \Vert_{L^1} \mathrm{tr}\Sigma_l^{-1} 
\end{split}
\end{equation}
with $(u,\mu,\Sigma) \in U \times (\mathbb{R}^N)^k \times \SPD(N)^k$.
Our update scheme is as follows:

\begin{defn}[Alternating minimization]\label{Def: Alternating min}
Let $g \in \mathcal{A}(p)$ for $p \geq 2$ and $\epsilon \in [0,1]$. Let $(u^{(0)},\mu^{(0)},\Sigma^{(0)}) \in U \times (\mathbb{R}^N)^k \times \SPD(N)^k$. Then we define the \textbf{alternating minimization scheme} by 
\begin{equation}\label{Eq: Update steps}
\begin{aligned}
(\mu^{(n+1)},\Sigma^{(n+1)}) \in &\argmin_{(\mu,\Sigma) \in S} E_{\mathrm{conv}}^{(\epsilon)}[u^{(n)},\mu,\Sigma] \\
u^{(n+1)} \in &\argmin_{u \in U}E_{\mathrm{conv}}^{(\epsilon)}[u,\mu^{(n+1)},\Sigma^{(n+1)}].
\end{aligned}
\end{equation}
\end{defn}
Please note that we suppress the $\epsilon$ index on $(u^{(n)},\mu^{(n)},\Sigma^{(n)})$ for better readability.

We also introduce the \textbf{soft mean} and \textbf{soft covariance} as
\begin{align*}
 m_{u_l}&:=\frac{1}{\Vert u_l \Vert_{L^1}} \int_{(0,1)^2}u_l(x)g(x) \dx \\
C_{u_l}&:=\frac{1}{\Vert u_l \Vert_{L^1}} \int_{(0,1)^2}u_l(x) \bigg( (g(x)-m_{u_l}) \otimes (g(x)-m_{u_l})\bigg) \dx 
\end{align*}
for $ \Vert u_l \Vert_{L^1} > 0$. For numerical purposes, the specific choice for inactive labels $l \notin A(u)$ matters, as we will see in this section's main theorem.

The CFC transfers to the soft covariance matrix:
\begin{lem}[CFC for soft labels]\label{Lem: CFC for soft labels}
Let $g \in \mathcal{A}(p)$ with constant $C>0$ and let $u:(0,1)^2 \to [0,1]$ be measurable with $\Vert u \Vert_{L^1}>0$. Then
\[
C_{u} \succeq C \Vert u \Vert_{L^1}^p I.
\]
\end{lem}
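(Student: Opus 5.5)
The plan is a layer-cake decomposition of $u$ that reduces the soft covariance to the hard covariances $C_{O_s}$ of the superlevel sets, where the CFC applies directly. Jensen's inequality then recombines the pieces and gives exactly the constant $C$.

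Fix $y \in S^{N-1}$ and write $h:=y^Tg$, $\bar t := y^T m_u$. First, I will use the identity
\[
y^T C_u y = \frac{1}{\Vert u \Vert_{L^1}} \int_{(0,1)^2} u(x)\,(h(x)-\bar t)^2 \dx .
\]
Next, I write $u(x)=\int_0^1 \mathbf{1}_{O_s}(x)\,\mathrm{d}s$ with the superlevel sets $O_s:=\{u>s\}$, which are measurable. By Tonelli,
\[
\int u\,(h-\bar t)^2 \dx = \int_0^1 \int_{O_s} (h-\bar t)^2 \dx\,\mathrm{d}s .
\]

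For each $s$ with $\vert O_s\vert>0$, the mean minimizes the squared deviation, so
\[
\int_{O_s}(h-\bar t)^2\dx \geq \int_{O_s}(h-y^Tm_{O_s})^2\dx = \vert O_s\vert\, y^TC_{O_s}y \geq C\vert O_s\vert^{p+1}
\]
by the CFC (Definition \ref{Def: CFC}). For $s$ with $\vert O_s\vert=0$, the bound $\geq C\vert O_s\vert^{p+1}=0$ holds trivially. Hence
\[
\Vert u\Vert_{L^1}\, y^TC_u y \geq C\int_0^1 \vert O_s\vert^{p+1}\,\mathrm{d}s .
\]

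The step that needs care is recombining these to get $\Vert u\Vert_{L^1}^{p+1}$ with no loss in the constant. Since $\mathrm{d}s$ on $[0,1]$ is a probability measure and $t\mapsto t^{p+1}$ is convex, Jensen's inequality together with $\int_0^1\vert O_s\vert\,\mathrm{d}s=\Vert u\Vert_{L^1}$ gives
\[
\int_0^1 \vert O_s\vert^{p+1}\,\mathrm{d}s\geq \Vert u\Vert_{L^1}^{p+1}.
\]
Dividing by $\Vert u\Vert_{L^1}>0$ yields $y^TC_uy\geq C\Vert u\Vert_{L^1}^p$ for every $y\in S^{N-1}$. By the same Courant--Fischer--Weyl argument used in Theorem \ref{Thm: Equivalence}, this is $C_u\succeq C\Vert u\Vert_{L^1}^pI$.

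An alternative route is the bathtub argument from the ($\impliedby$) direction of Theorem \ref{Thm: Equivalence}, applied to the weighted measure $B\mapsto\int u\,\mathbf{1}_B(h)\dx\leq\nu_y(B)$. That argument only produces a worse constant of the form $C/2^{p+1}$, so I would use the layer-cake route to recover the sharp $C$.
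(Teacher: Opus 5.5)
Your proposal is correct and follows essentially the same route as the paper: a layer-cake decomposition of $u$ into superlevel sets, replacing $y^Tm_u$ by $y^Tm_{O_s}$ (the mean minimizes the squared deviation) so the CFC applies to each $O_s$, and Jensen's inequality for $a\mapsto a^{p+1}$ to recombine without losing the constant $C$. Your explicit handling of the levels with $\vert O_s\vert=0$ fills in a detail the paper leaves implicit.
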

\begin{proof}
Let $y \in S^{N-1}$ be arbitrary. Set $O_t:=\{x \in (0,1)^2 \mid u(x)>t\}$ for $t \in [0,1)$, so that $u(x) = \int_0^1 \mathbf{1}_{O_t}(x) \,\text{d}t$ and $\Vert u \Vert_{L^1} = \int_0^1 \vert O_t \vert \, \text{d}t$ by Fubini. Since the mean minimizes the quadratic deviation, we have for every $t$ with $\vert O_t \vert > 0$,
\[
\int_{O_t} (y^Tg(x)-y^T m_u)^2 \dx \geq \int_{O_t} (y^Tg(x)-y^Tm_{O_t})^2 \dx = \vert O_t \vert  y^T C_{O_t} y \geq C \vert O_t \vert^{p+1}
\]
by the CFC applied to the measurable set $O_t \subset (0,1)^2$. Integrating over $t$ and using Jensen's inequality for the convex function $a \mapsto a^{p+1}$  yields
\begin{align*}
\bigg(\int_{(0,1)^2} u(x) \dx \bigg)  y^T C_u y &= \int_0^1 \int_{O_t} (y^Tg(x)- y^Tm_u )^2 \dx   \text{d}t  \\
& \geq C \int_0^1 \vert O_t \vert^{p+1}  \text{d}t \geq C \bigg( \int_0^1 \vert O_t \vert  \text{d}t \bigg)^{p+1},
\end{align*}
and dividing by $\Vert u \Vert_{L^1} = \int_0^1 \vert O_t \vert  \text{d}t$ yields $y^T C_u y \geq C \Vert u \Vert_{L^1}^p$.
\end{proof}

We record another lemma for the sake of completeness:
\begin{lem}[Closed forms for label dependent energy]\label{Lem: Label dependant energy}
Let $g \in \mathcal{A}(p)$ for $p \geq 2$ and $\epsilon \in [0,1]$. Then, for any $u \in U$, $\min_{(\mu,\Sigma) \in S} E_{\mathrm{conv}}^{(\epsilon)}[u,\mu,\Sigma]$ is uniquely attained by
    \[ 
    \hat{\mu}_l^{(\epsilon)}=m_{u_l} \quad \text{and} \quad \hat{\Sigma}_l^{(\epsilon)}=C_{u_l}+\epsilon I
    \]
    for $l \in A(u)$ and any $(\hat{\mu}_l^{(\epsilon)},\hat{\Sigma}_l^{(\epsilon)}) \in \mathbb{R}^N \times \SPD(N)$ for $l \notin A(u)$. The energy at this minimum is given by
    \[
    E_{\mathrm{conv}}^{(\epsilon)}[u,\hat{\mu}^{(\epsilon)},\hat{\Sigma}^{(\epsilon)}]=\sum_{l \in A(u)} \bigg(
     \frac{\Vert u_l \Vert_{L^1}}{2} \ln \big( (2 \pi e)^N \det (C_{u_l}+ \epsilon I) \big) + \lambda \Vert u_l \Vert_{\mathrm{TV}((0,1)^2)} \bigg).
    \]
\end{lem}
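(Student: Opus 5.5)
The plan is to mirror the proof of Lemma \ref{Lem: Partition dependant energies}, with the indicator $\mathbf{1}_{O_l}$ replaced by the weight $u_l$. First, $E_{\mathrm{conv}}^{(\epsilon)}$ splits into a sum over labels. The total variation terms do not depend on $(\mu,\Sigma)$. So the minimization over $S$ decouples into $k$ independent minimizations over $(\mu_l,\Sigma_l)\in\mathbb{R}^N\times\SPD(N)$.

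For $l\notin A(u)$ we have $u_l=0$ almost everywhere. Then both the weighted data term and the $\epsilon$-term vanish for every choice of $(\mu_l,\Sigma_l)$, so any value is a minimizer and the contribution is $\lambda\Vert u_l\Vert_{\mathrm{TV}}=0$.

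For $l\in A(u)$, I expand $|g(x)-\mu_l|^2_{\Sigma_l}$ around the soft mean $m_{u_l}$. The cross term integrates to zero against $u_l$ by the definition of $m_{u_l}$. This gives
\[
\int u_l f(g\mid\mu_l,\Sigma_l)\dx+\tfrac{\epsilon}{2}\Vert u_l\Vert_{L^1}\mathrm{tr}\Sigma_l^{-1}
=\tfrac{\Vert u_l\Vert_{L^1}}{2}\Big(\mathrm{tr}\big(\Sigma_l^{-1}(C_{u_l}+\epsilon I)\big)+|m_{u_l}-\mu_l|^2_{\Sigma_l}+\ln\det\Sigma_l+N\ln 2\pi\Big).
\]
The term $|m_{u_l}-\mu_l|^2_{\Sigma_l}$ is nonnegative and vanishes only at $\mu_l=m_{u_l}$, since $\Sigma_l^{-1}\succ0$. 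It remains to minimize $\Sigma\mapsto \mathrm{tr}(\Sigma^{-1}A)+\ln\det\Sigma$ over $\SPD(N)$ with $A=C_{u_l}+\epsilon I$. Write $B=A^{1/2}\Sigma^{-1}A^{1/2}$ with eigenvalues $\beta_i>0$. The function becomes $\sum_i(\beta_i-\ln\beta_i)+\ln\det A$. Since $\beta-\ln\beta\ge1$ with equality only at $\beta=1$, the unique minimizer is $\Sigma=A$, with minimal value $N+\ln\det A$. Substituting back gives $\tfrac{\Vert u_l\Vert_{L^1}}{2}\ln((2\pi e)^N\det(C_{u_l}+\epsilon I))$, which is the claimed energy once the TV terms are added.

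The only delicate point is ensuring $A\succ0$, so that the minimizer actually lies in $\SPD(N)$. This is the main obstacle, and only in the case $\epsilon=0$. For $\epsilon>0$ it is trivial, because $A\succeq\epsilon I$. For $\epsilon=0$ I invoke Lemma \ref{Lem: CFC for soft labels}: since $u_l$ takes values in $[0,1]$ and $\Vert u_l\Vert_{L^1}>0$, it gives $C_{u_l}\succeq C\Vert u_l\Vert_{L^1}^pI\succ0$. Without this bound the infimum could be $-\infty$, as discussed in the introduction. With it, the minimization over $\Sigma_l$ is attained, the decoupled minimizers are unique on the active labels, and the closed form of the minimal energy follows.
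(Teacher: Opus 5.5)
Your proposal is correct and follows the same route as the paper. The paper omits this proof as a direct adaptation of the lemma on partition-dependent energies: complete the square around the (soft) mean, apply the standard Gaussian maximum-likelihood argument in $\Sigma_l$, and use the covariance floor (here via the soft-label lemma, giving $C_{u_l}\succeq C\Vert u_l\Vert_{L^1}^p I$) to ensure $C_{u_l}\succ 0$ when $\epsilon=0$. Your $B=A^{1/2}\Sigma^{-1}A^{1/2}$ eigenvalue argument simply makes explicit the maximum-likelihood step that the paper cites.
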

The proof is essentially the same as that of Lemma \ref{Lem: Partition dependant energies} and is therefore omitted. With this, we can show that alternately updating the labels yields a convergent (sub)sequence, at which our update scheme \eqref{Eq: Update steps} comes to a halt:

\begin{thm}[Convergence of alternating minimization]\label{Thm: Convergence of AM}
Let $g \in \mathcal{A}(p)$ for $p \geq 2$ and $\epsilon \in [0,1]$ and $(u^{(0)},\mu^{(0)},\Sigma^{(0)}) \in U \times (\mathbb{R}^N)^k \times \SPD(N)^k$. Let $(u^{(n)},\mu^{(n)},\Sigma^{(n)})$ be the sequence generated by Definition \ref{Def: Alternating min}. This sequence is well-defined and there exist $(u,\mu,\Sigma) \in U \times (\mathbb{R}^N)^k \times \SPD(N)^k $ and a subsequence (not relabeled) such that
\[
u^{(n)} \to u \text{ in } L^1((0,1)^2,\mathbb{R}^k)
\]
and
\[
\vert \mu_l^{(n+1)}-\mu_l \vert +\vert \Sigma_l^{(n+1)}-\Sigma_l \vert_F \to 0 \text{ for all } l \in A(u).
\]
Furthermore, if for $l \notin A(u)$, we set $(\mu_l,\Sigma_l)=(\mu_{l^*},\Sigma_{l^{\ast} })$ for some $l^{\ast} \in A(u)$, we have 
\begin{equation} \label{Eq: Minimality of (u,mu,Sigma)}
 (\mu,\Sigma) \in \argmin_{(\mu',\Sigma') \in S} E_{\mathrm{conv}}^{(\epsilon)}[u,\mu',\Sigma'] \quad \text{and} \quad 
 u \in \argmin_{v \in U}E_{\mathrm{conv}}^{(\epsilon)} [v,\mu,\Sigma].
\end{equation}
\end{thm}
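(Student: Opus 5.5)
We prove Theorem \ref{Thm: Convergence of AM} in three steps: well-definedness, compactness of the labels with a uniform energy bound, and passage to the limit in both minimality conditions.

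\textbf{Well-definedness.} For fixed $u^{(n)}\in U$, the statistical update is given in closed form by Lemma \ref{Lem: Label dependant energy}, namely $\mu_l^{(n+1)}=m_{u_l^{(n)}}$ and $\Sigma_l^{(n+1)}=C_{u_l^{(n)}}+\epsilon I$ for active labels. For $\epsilon=0$ this matrix is positive definite by Lemma \ref{Lem: CFC for soft labels}. For inactive labels we fix a choice, copying an active label. For fixed $(\mu,\Sigma)$, the label update minimizes a convex functional of $u$. It is a linear data term with bounded coefficient $f(g(\cdot)\mid\mu_l,\Sigma_l)\in L^\infty$, since $g$ is bounded and $\Sigma_l\succ0$, plus a TV term, over the closed convex set of $\Delta^k$-valued BV maps. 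Existence follows from the direct method: the data term is bounded below because $f$ is bounded, so a minimizing sequence has bounded TV. By $BV\subset\subset L^1$ a subsequence converges, the simplex constraint is preserved by $L^1$ limits, and TV is lower semicontinuous.

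\textbf{Compactness.} Monotonicity along the scheme gives
\[
E_{\mathrm{conv}}^{(\epsilon)}[u^{(n+1)},\mu^{(n+1)},\Sigma^{(n+1)}]\le E_{\mathrm{conv}}^{(\epsilon)}[u^{(n)},\mu^{(n)},\Sigma^{(n)}]\le\dots
\]
We need an analogue of Theorem \ref{Thm: Lower bounds} for soft labels. By Lemma \ref{Lem: Label dependant energy} and Lemma \ref{Lem: CFC for soft labels},
\[
E_{\mathrm{conv}}^{(\epsilon)}[u,\mu,\Sigma]\ge\sum_{l\in A(u)}\Big(\tfrac{N\ln(2\pi eC)}{2}\|u_l\|_{L^1}+\tfrac{pN}{2}\psi(\|u_l\|_{L^1})\Big)+\lambda\sum_l\|u_l\|_{\mathrm{TV}}.
\]
Since $\sum_l\|u_l\|_{L^1}=1$, this is at least $-K_0+\lambda\sum_l\|u_l\|_{\mathrm{TV}}$. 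Hence $\sum_l\|u_l^{(n)}\|_{\mathrm{TV}}$ is uniformly bounded, and a subsequence satisfies $u^{(n)}\to u$ in $L^1$ with $u\in U$.

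\textbf{Limit of the statistics.} For $l\in A(u)$ we have $\|u_l^{(n)}\|_{L^1}\ge\|u_l\|_{L^1}/2$ eventually. The soft means and covariances are continuous in $L^1$ as long as the mass stays away from zero, as in Case 1 of Lemma \ref{Lem: LSC of the energies}. Hence $\mu_l^{(n+1)}\to m_{u_l}$ and $\Sigma_l^{(n+1)}\to C_{u_l}+\epsilon I\succ0$. This gives the first half of \eqref{Eq: Minimality of (u,mu,Sigma)} by Lemma \ref{Lem: Label dependant energy}, where the copied values for inactive labels are admissible because any value is optimal there.

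\textbf{Main obstacle: the second half of \eqref{Eq: Minimality of (u,mu,Sigma)}.} We must show that $u$ minimizes $E_{\mathrm{conv}}^{(\epsilon)}[\cdot,\mu,\Sigma]$. Two difficulties arise: the subsequence involves $u^{(n)}$ paired with $(\mu^{(n+1)},\Sigma^{(n+1)})$, while $u^{(n+1)}$ is the minimizer; and inactive labels may carry statistics that degenerate.

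Our first attempt is a descent argument. Set $a_n=E_{\mathrm{conv}}^{(\epsilon)}[u^{(n)},\mu^{(n+1)},\Sigma^{(n+1)}]$ and $b_n=E_{\mathrm{conv}}^{(\epsilon)}[u^{(n+1)},\mu^{(n+1)},\Sigma^{(n+1)}]$. These satisfy $a_n\ge b_n\ge a_{n+1}$ and are bounded below, so they converge to a common limit and $a_n-b_n\to0$. For any $v\in U$ we have $b_n\le E_{\mathrm{conv}}^{(\epsilon)}[v,\mu^{(n+1)},\Sigma^{(n+1)}]$.

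We then pass to the limit. For active labels the coefficients $f(g\mid\mu_l^{(n+1)},\Sigma_l^{(n+1)})$ converge uniformly. For inactive labels the data terms $\int u_l^{(n)}f$ are controlled as in Case 2 of Lemma \ref{Lem: LSC of the energies}, via $\|u_l^{(n)}\|_{L^1}\ln\det$ bounds, and tend to $0$. Combined with lower semicontinuity of TV, this yields
\[
E_{\mathrm{conv}}^{(\epsilon)}[u,\mu,\Sigma]\le\liminf a_n=\lim b_n\le E_{\mathrm{conv}}^{(\epsilon)}[v,\mu,\Sigma]
\]
for all $v$ whose inactive components vanish. For $v$ that charge labels inactive for $u$, we use the copied statistics $(\mu_{l^*},\Sigma_{l^*})$. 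The resulting energy is compared via the convexity and the label-swapping structure of the relaxation, moving mass from $l$ onto $l^*$ without increasing the data term and, since $\|v_l+v_{l^*}\|_{\mathrm{TV}}\le\|v_l\|_{\mathrm{TV}}+\|v_{l^*}\|_{\mathrm{TV}}$, without increasing the TV term. This reduces the claim to the previous case. We expect this last reduction to be the delicate point.
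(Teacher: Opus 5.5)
Your proposal is correct and follows essentially the same route as the paper. It uses the soft-label lower bound $-K_0+\lambda\sum_l\Vert u_l\Vert_{\mathrm{TV}((0,1)^2)}$ obtained from the soft CFC lemma and the closed-form label-dependent energy, BV compactness from the monotone energies, and Case~1/Case~2 of the lower semi-continuity lemma for active and inactive labels. It then uses the squeeze $E_{\mathrm{conv}}^{(\epsilon)}[u,\mu,\Sigma]\le E_\infty\le E_{\mathrm{conv}}^{(\epsilon)}[\tilde v,\mu,\Sigma]$, where $\tilde v$ is the rearrangement moving all mass of labels inactive for $u$ onto $l^\ast$. The reduction you flag as delicate is immediate and is exactly the paper's last step: with the copied statistics the data term is unchanged, and the triangle inequality for the total variation shows the TV term does not increase.
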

\begin{proof}
Let $g \in \mathcal{A}(p)$ for $p \geq 2$ and $\epsilon \in [0,1]$. Invoking Lemmas \ref{Lem: CFC for soft labels} and \ref{Lem: Label dependant energy}, we get, as in the proof of Theorem \ref{Thm: Lower bounds} and with the same $\psi$ \eqref{Def: psi-function t*ln(t)},
\begin{equation*}
\begin{split}
E_{\mathrm{conv}}^{(\epsilon)}[u,\mu,\Sigma] &\geq \sum_{l \in A(u)} \bigg(
     \frac{\Vert u_l \Vert_{L^1}}{2} \ln \big( (2 \pi e)^N \det (C_{u_l}+ \epsilon I) \big) + \lambda \Vert u_l \Vert_{\mathrm{TV}((0,1)^2)} \bigg) \\
     & \geq \sum_{l=1}^k \bigg(  \frac{N \ln(2\pi e C)}{2} \Vert u_l \Vert_{L^1} + \frac{pN}{2} \psi(\Vert u_l \Vert_{L^1}) + \lambda \Vert u_l \Vert_{\mathrm{TV}((0,1)^2)}  \bigg) \\
     & \geq -K_0 + \sum_{l=1}^k \lambda \Vert u_l \Vert_{\mathrm{TV}((0,1)^2)}
\end{split}
\end{equation*}
with the same $K_0>0$ as in Theorem \ref{Thm: Lower bounds}, in particular independent of $\epsilon>0$. By the direct method, using that bounded sets in $BV((0,1)^2)$ are pre-compact in $L^1((0,1)^2)$, each $u^{(n)}$ is well-defined, since the $\argmin$ is non-empty. The limit lies in $U$ because $\Delta^k$ is closed and $\Vert \cdot \Vert_{\mathrm{TV}((0,1)^2)}$ is lower semi-continuous with respect to $L^1$-convergence \cite{ambrosio2000functions}. In addition, we have $f(g(\cdot) \mid \mu_l,\Sigma_l)+\frac{\epsilon}{2}\tr \Sigma_l^{-1} \in L^{\infty}((0,1)^2)$ for fixed $(\mu,\Sigma) \in S$, which makes the data terms continuous. Moreover, the update steps do not increase the energy, and hence we have
\[
E_{\mathrm{conv}}^{(\epsilon)}[u^{(0)},\mu^{(0)},\Sigma^{(0)}] \geq E_{\mathrm{conv}}^{(\epsilon)}[u^{(n)},\mu^{(n)},\Sigma^{(n)}] \geq -K_0 + \sum_{l=1}^k \lambda \Vert u_l^{(n)} \Vert_{\mathrm{TV}((0,1)^2)}.
\]
As a direct consequence, invoking again the pre-compactness of bounded sets of $BV((0,1)^2)$ in $L^1((0,1)^2)$, we get a subsequence (not relabeled) $u^{(n)} \to u \in U$ in $L^1((0,1)^2,\mathbb{R}^k)$. By the same arguments as in Lemma \ref{Lem: LSC of the energies}, the soft mean and soft covariance converge for $l \in A(u)$. Indeed, for $l \in A(u)$ we have $\Vert u_l^{(n)} \Vert_{L^1} \geq \frac{\Vert u_l \Vert_{L^1}}{2}>0$ for $n$ large, so that $l \in A(u^{(n)})$ and, by Lemma \ref{Lem: Label dependant energy}, $(\mu_l^{(n+1)},\Sigma_l^{(n+1)})=(m_{u_l^{(n)}},C_{u_l^{(n)}}+\epsilon I) \to (m_{u_l},C_{u_l}+\epsilon I)=:(\mu_l,\Sigma_l)$ as in Case 1 of the proof of Lemma \ref{Lem: LSC of the energies}. Note that the sequence index is shifted, since these are the statistical variables generated from $u^{(n)}$.
For the last part, we note that minimality of $(\mu,\Sigma) \in S$ is given by Lemma \ref{Lem: Label dependant energy}, since for inactive labels $ l \notin A(u)$, we can in particular choose $(\mu_l,\Sigma_l)=(\mu_{l^{\ast} },\Sigma_{l^{\ast} })$. This is the first part of \eqref{Eq: Minimality of (u,mu,Sigma)}. For the second part, let $v \in U$ be arbitrary and define the re-arrangement
\[
\tilde{v}_l(x)= \begin{cases}
v_{l^{\ast} }(x)+ \sum_{l' \notin A(u)} v_{l'}(x) & \text{for }l=l^{\ast} \\
v_l(x)   &\text{for }l \in A(u) \setminus \{l^{\ast}  \} \\
0 &\text{for }l \notin A(u).
\end{cases}
\]
We have $\tilde{v} \in U$, since 
\[
\Vert \tilde{v}_{l^{\ast} } \Vert_{\mathrm{TV}((0,1)^2)} \leq \Vert v_{l^{\ast} } \Vert_{\mathrm{TV}((0,1)^2)}+  \sum_{l \notin A(u)}\Vert v_{l} \Vert_{\mathrm{TV}((0,1)^2)}
\]
by the triangle inequality. Hence we have $\sum_{l=1}^k \Vert \tilde{v}_{l} \Vert_{\mathrm{TV}((0,1)^2)}  \leq \sum_{l=1}^k \Vert v_{l} \Vert_{\mathrm{TV}((0,1)^2)}$ and our re-arrangement does not increase the perimeter terms. For the data terms, setting $\phi^{(\epsilon)}(x \mid \mu_l,\Sigma_l):=f(g(x) \mid \mu_l,\Sigma_l)+ \frac{\epsilon}{2}\tr \Sigma_l^{-1}$, we compute
\begin{align*}
&\sum_{l=1}^k \int_{(0,1)^2} \tilde{v}_l(x) \phi^{(\epsilon)}(x \mid \mu_l,\Sigma_l)\dx \\
=&\int_{(0,1)^2} \big( v_{l^{\ast} }(x)+\sum_{l' \notin A(u)}v_{l'}(x) \big) \phi^{(\epsilon)}(x \mid \mu_{l^{\ast} },\Sigma_{l^{\ast} })\dx \\
&\quad+\sum_{l \in A(u) \setminus \{l^{\ast}  \}} \int_{(0,1)^2} v_l(x) \phi^{(\epsilon)}(x \mid \mu_l,\Sigma_l) \dx \\
=&\sum_{l=1}^k \int_{(0,1)^2} v_l(x) \phi^{(\epsilon)}(x \mid \mu_l,\Sigma_l)\dx 
\end{align*}
and hence this re-arrangement leaves the data terms invariant. It remains to compare the energies of $u$ and $\tilde{v}$. Since the update steps do not increase the energy and the energies are bounded below, they converge to some $E_{\infty} \in \mathbb{R}$. By the lower semi-continuity of $\Vert \cdot \Vert_{\mathrm{TV}((0,1)^2)}$, the convergence of the data terms of the labels $l \in A(u)$ and the fact that the data terms of the labels $l \notin A(u)$ are, by Lemma \ref{Lem: Label dependant energy}, of the form $\frac{\Vert u_l^{(n)} \Vert_{L^1}}{2} \ln \big( (2 \pi e)^N \det (C_{u_l^{(n)}}+ \epsilon I) \big) \to 0$ by Case 2 of the proof of Lemma \ref{Lem: LSC of the energies}, we get
\[
E_{\mathrm{conv}}^{(\epsilon)}[u,\mu,\Sigma] \leq \liminf_{n \to \infty}E_{\mathrm{conv}}^{(\epsilon)}[u^{(n)},\mu^{(n+1)},\Sigma^{(n+1)}]=E_{\infty}.
\]
For the active labels of $\tilde{v}$, we have $A(\tilde{v}) \subset A(u)$ and $u^{(n+1)}$  is a minimizer of $E_{\mathrm{conv}}^{(\epsilon)}[\cdot,\mu^{(n+1)},\Sigma^{(n+1)}]$, and therefore we have
\[
E_{\infty} \leq E_{\mathrm{conv}}^{(\epsilon)}[u^{(n+1)},\mu^{(n+1)},\Sigma^{(n+1)}] \leq E_{\mathrm{conv}}^{(\epsilon)}[\tilde{v},\mu^{(n+1)},\Sigma^{(n+1)}] \xrightarrow{n \to \infty} E_{\mathrm{conv}}^{(\epsilon)}[\tilde{v},\mu,\Sigma].
\]

We therefore conclude that
\[
E_{\mathrm{conv}}^{(\epsilon)}[u,\mu,\Sigma] \leq E_{\mathrm{conv}}^{(\epsilon)}[\tilde{v},\mu,\Sigma] \leq E_{\mathrm{conv}}^{(\epsilon)}[v,\mu,\Sigma],
\]
which implies the second part of \eqref{Eq: Minimality of (u,mu,Sigma)}.
\end{proof}

We would like to close out this section by noting that extracting any further properties of the sequence $(u^{(n)},\mu^{(n)},\Sigma^{(n)})$, such as convergence of the full sequence or convergence to a critical point/minimum, is beyond the scope of this paper and would require extensive analysis \cite{ConvergenceCrit}.

\section{Numerical experiments}
We would like to present numerical evidence for our main theorems. As our canonical example, we choose the chessboard, which is given in analytical form by 
\begin{equation}\label{Def: Chessboard}
g(x)= \frac{1}{2} \big( 1-\mathrm{sgn}\big(\sin(8\pi x_1)\sin(8 \pi x_2)\big)\big) \in L^{\infty}((0,1)^2),
\end{equation}
where $\mathrm{sgn}$ denotes the signum function with $\mathrm{sgn}(0)=0$.  We note that the essential range of $g$ is $\{0,1\}$ and that the pushforward measure is given by $\nu=\frac{1}{2}\delta_0 + \frac{1}{2}\delta_1$. Note that on any (subset of a) level set $O_1$ of $g$ with positive measure and finite perimeter, we get $C_{O_1}=0$ and we can make the energy of the GMS \eqref{Def: GMS} arbitrarily low by setting $\Sigma_1=C_{O_1}+ \epsilon I$ and sending $\epsilon \to 0$. In practice, one would of course regularize such an expression by considering $C_{O_1}+\epsilon I$ for a fixed $\epsilon>0$, which is exactly what the pGMS \eqref{Def: pGMS} enforces automatically. By Theorem \ref{Thm: Noise makes images admissible}, we can add (a wavelet-based approximation \cite{fractionalgaussian} of a) fractional Gaussian field $W$ of Hurst index $0.9$  (restricted to $(0,1)^2$) to get the noisy image $g+0.1\cdot W$.

\begin{figure}[H]
    \centering
    \includegraphics[scale=0.33]{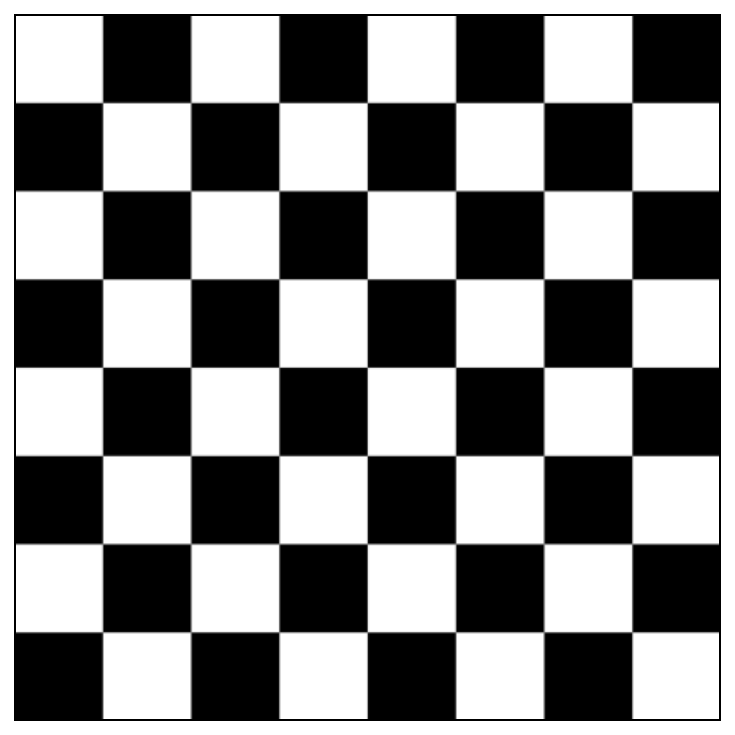}
    \includegraphics[scale=0.33]{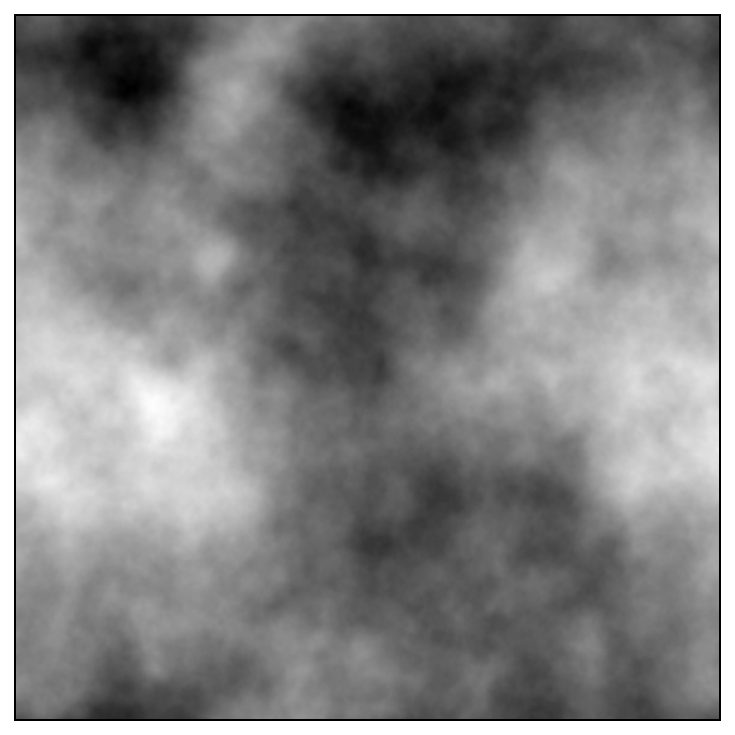}
    \includegraphics[scale=0.33]{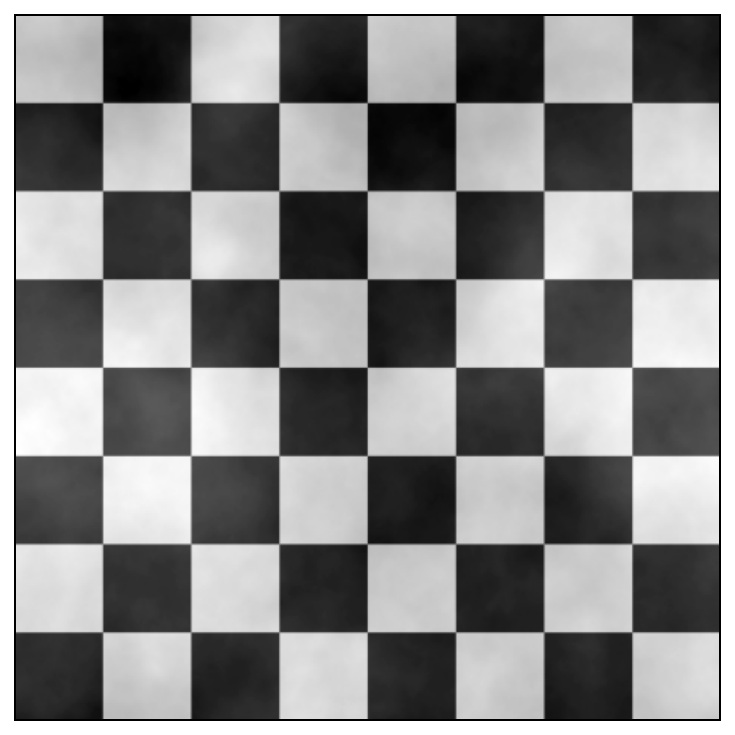}
    \caption{Illustration of the image $g$, the noise $W$ and the noisy image $g+0.1\cdot W$.}
\end{figure}

Theorem \ref{Thm: Noise makes images admissible} applied with $g\equiv0$ tells us that the noise $W$ itself satisfies the CFC. By Theorem \ref{Thm: Equivalence} applied for $p=2$, the CFC is equivalent to our image having a Lipschitz continuous cumulative distribution function $F(t)$. We see that the cumulative distribution of $g$ has a jump type discontinuity at $\{0,1\}$ and hence would violate the RDC and therefore, we have $g \notin \bigcup_{p \geq 2}\mathcal{A}(p)$.
\begin{figure}[H]
    \centering
    \includegraphics[scale=0.33]{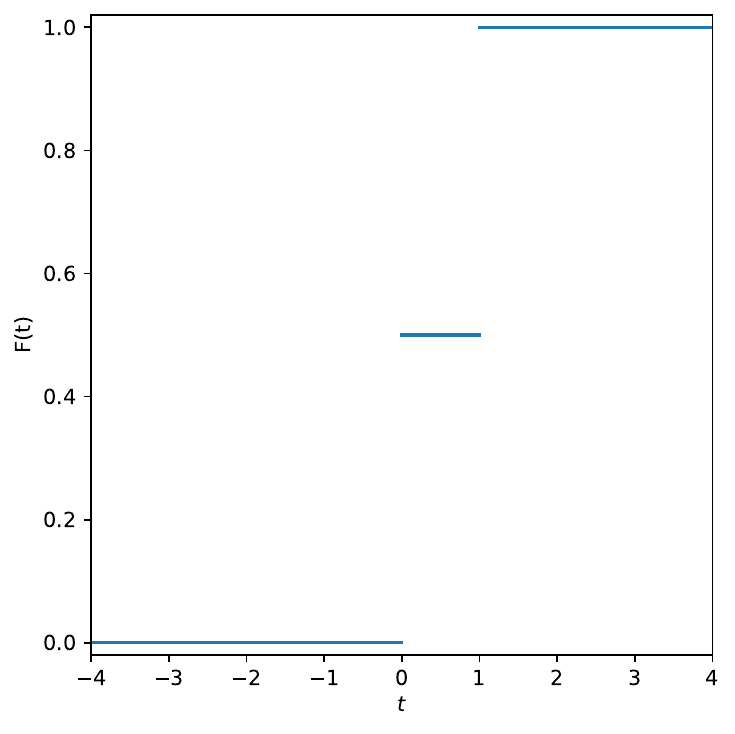}
    \includegraphics[scale=0.33]{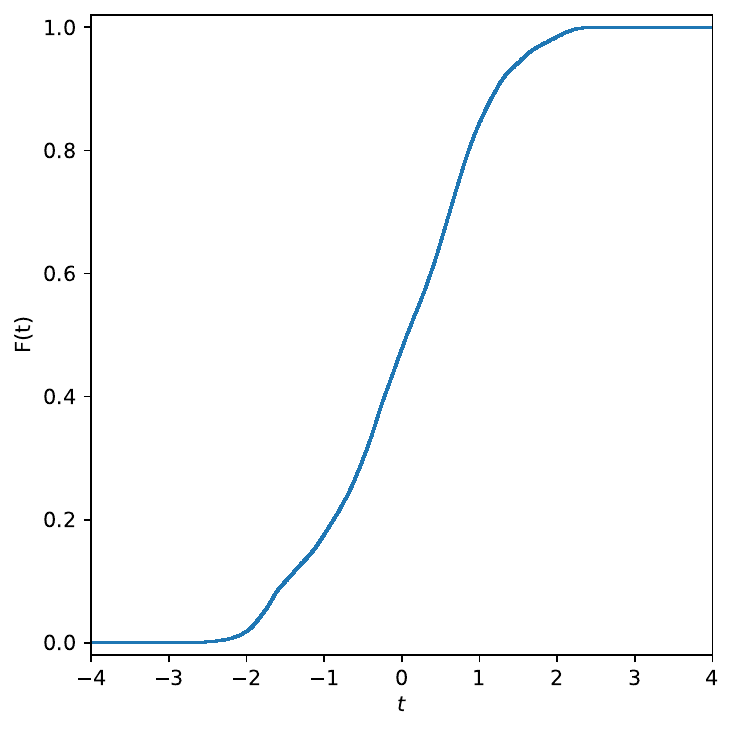}
    \includegraphics[scale=0.33]{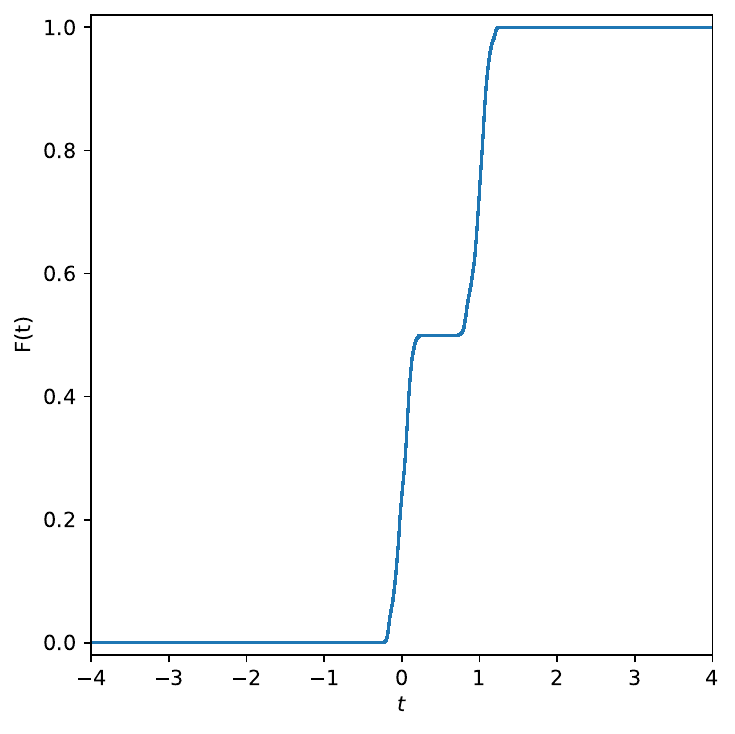}
    \caption{Cumulative distributions of the image $g$, the noise $W$ and the noisy image $g+0.1\cdot W$.}
\end{figure}
We also plot the modulus of continuity $\omega(r)=\sup_{t \in \mathbb{R}} \big(F(t+r)-F(t)\big)$ to see that it also matches our theoretical prediction $\omega(r)\leq C r$ coming from the RDC.
\begin{figure}[H]
    \centering
    \includegraphics[scale=0.33]{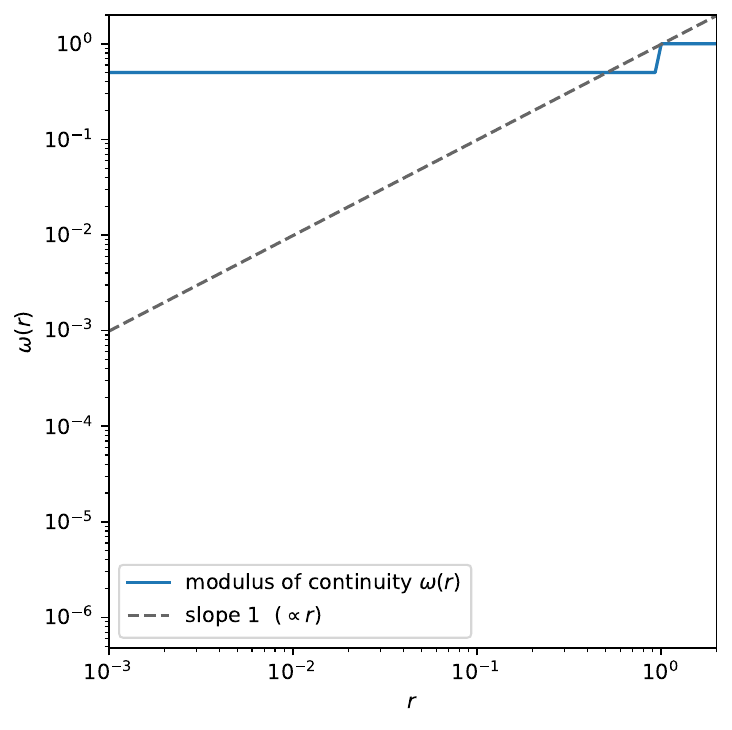}
    \includegraphics[scale=0.33]{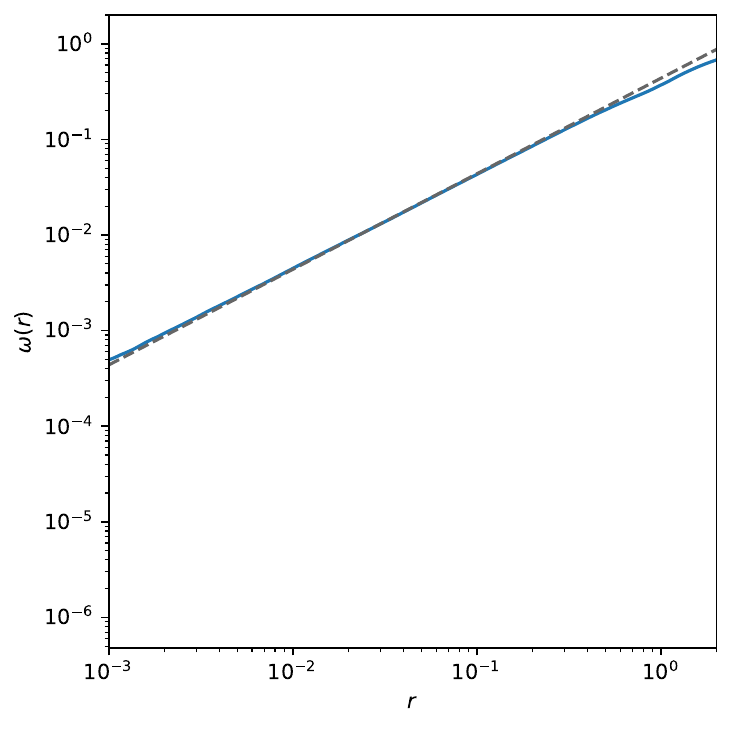}
    \includegraphics[scale=0.33]{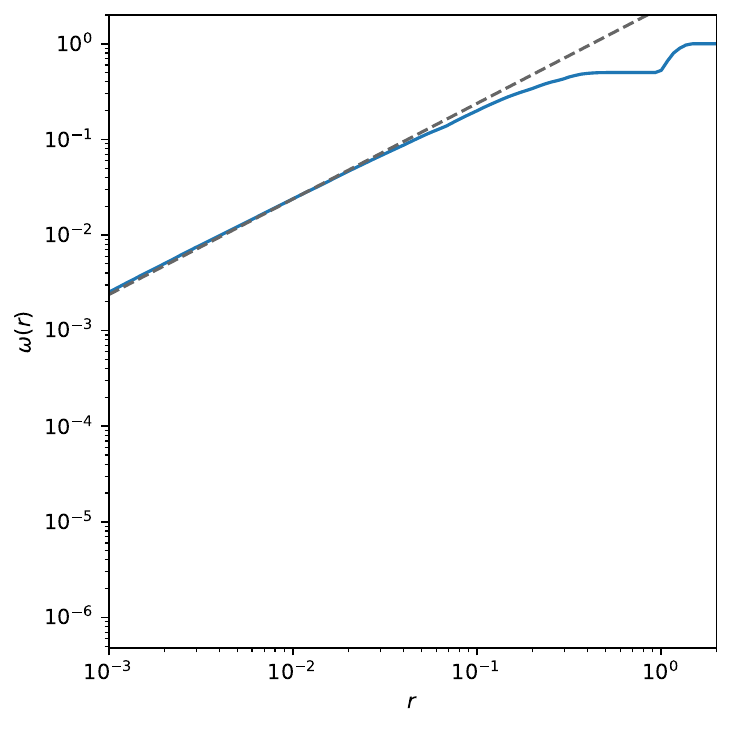}
    \caption{Modulus of continuity of the image $g$, the noise $W$ and the noisy image $g+0.1\cdot W$ plotted on  logarithmic scales.}
\end{figure}

$g+0.1 \cdot W$ is represented as a $1024 \times 1024$ grayscale image and we refer to \cite{Conrad_Berkels_2025_Hyperspectral} for further implementation and discretization details, but we sketch it here for completeness. To find (or rather approximate) a minimizer $(\hat{\mathcal{O}},\hat{\mu},\hat{\Sigma})$ of $E$ numerically, we use the convex relaxation \eqref{Eq: Convex relaxations}. To solve the relaxed problem, we use the alternating minimization (AM) from Definition \ref{Def: Alternating min} with a random initialization and update the labels $u \in U$ with the Primal-Dual Hybrid Gradient (PDHG) algorithm \cite{PDHG} and the statistical variables $(\mu,\Sigma) \in S$ in closed form given by Lemma \ref{Lem: Label dependant energy}. In this case, Theorem \ref{Thm: Convergence of AM} tells us that a (sub)sequence generated by the alternating minimization converges to a point $(\hat{u},\hat{\mu},\hat{\Sigma}) \in U \times (\mathbb{R}^N)^k \times \SPD(N)^k$ minimizing the convexified energy coordinatewise.

We use the following parameters for our numerical segmentation:
\begin{table}[H]
\centering
\caption{Parameters used for the segmentation.}
\label{tab:label}
\begin{tabular}{ll}
\hline
Parameter  & Value \\
\hline
$k$ & $2$ \\
$\lambda$ & $10^{-4}$ \\
AM loops & $4$ \\ 
PDHG steps & $250$ \\
\hline
\end{tabular}
\end{table}

From $\hat{u} \in U$, we recover a partition $\hat{\mathcal{O}} \in P_k$ by setting 
\[
\ell(x)[\hat{u}]:=\min \big(\argmax_{l \in \{1,2\}} \hat{u}_l(x) \big)
\]
and 
\[
\hat{O}_l= \{x \in (0,1)^2 \mid \ell(x)[\hat{u}]=l \} \quad (l=1,2).
\]
The resulting partition $\hat{\mathcal{O}}=(\hat{O}_1,\hat{O}_2)$ is of course an acceptable segmentation for our example, matching exactly the original chessboard $g$.
\begin{figure}[H]
    \centering
    \includegraphics[scale=0.5]{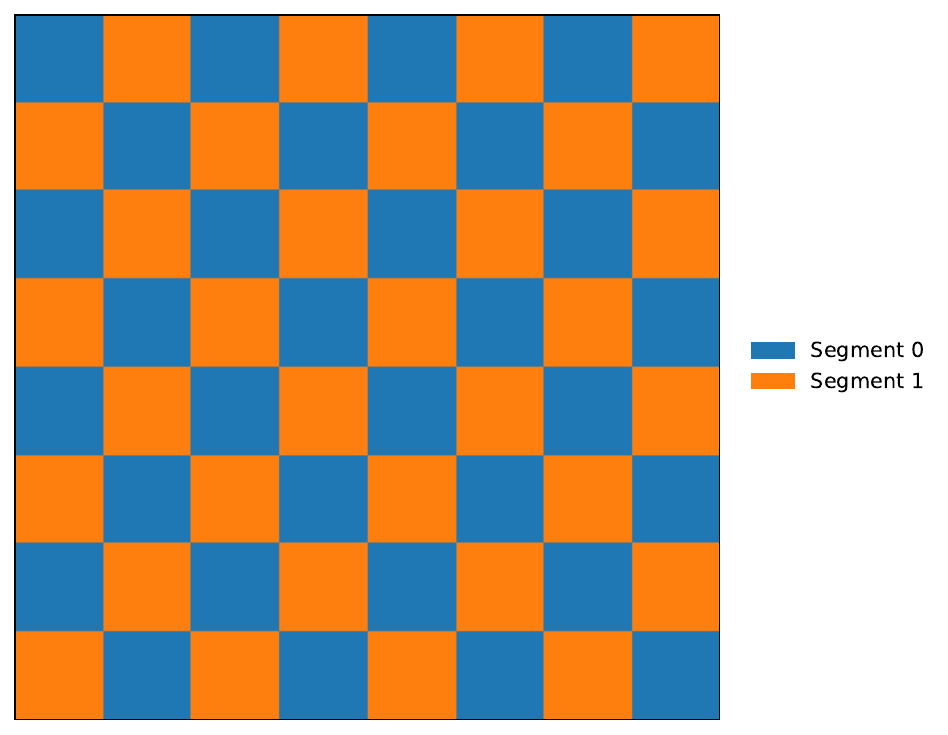}    
    \caption{Segmentation $(\epsilon=0)$ of the noisy image $g + 0.1 \cdot W$.}
\end{figure}
To see that the hypothesis $g \in \mathcal{A}(p)$ cannot be dropped for the existence of a lower bound in Theorem \ref{Thm: Lower bounds}, we examine the energy $\tilde{E}^{(\epsilon)}[\hat{\mathcal{O}}]$ as $\epsilon \to 0$ for the (clean) chessboard $g \in L^{\infty}((0,1)^2) \setminus \bigcup_{p \geq 2} \mathcal{A}(p) $. The pGMS has according to \eqref{Eq: Lower bound for pGMS} an $\epsilon$-dependent lower bound, which goes to $-\infty$ as $\epsilon \to 0$, and it turns out that this lower bound is exact for our test image:
\begin{figure}[H]
    \centering
    \includegraphics[scale=0.75]{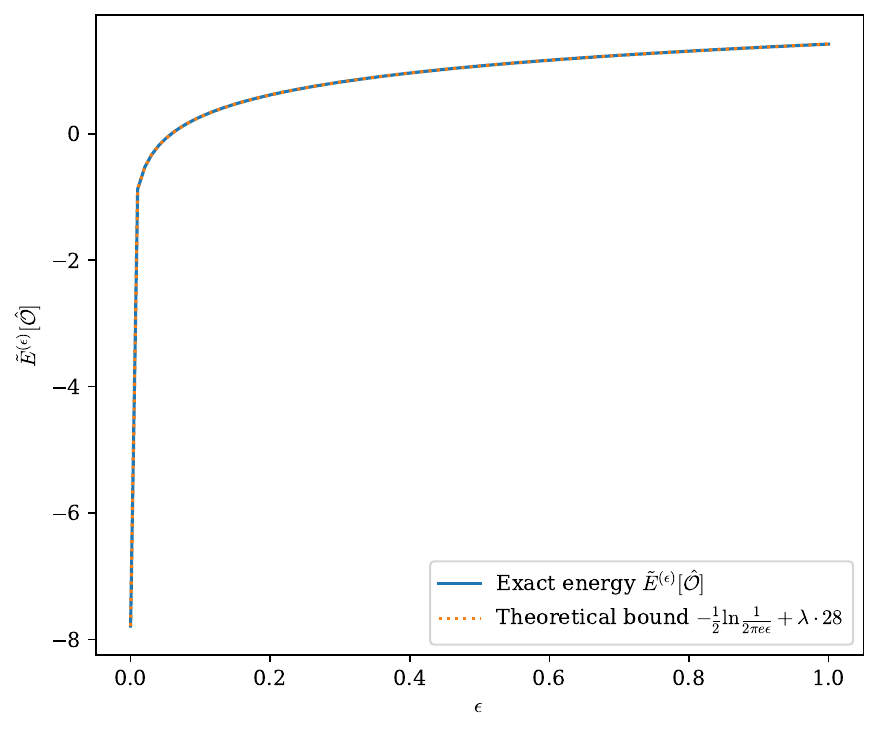}    
    \caption{pGMS energy of the chessboard image $g$ as $\epsilon \to 0$.}
\end{figure}

To test the convergence rate of $ \bigg\vert \tilde{E}^{(\epsilon)}[\hat{\mathcal{O}}]-\tilde{E}[\hat{\mathcal{O}}] \bigg\vert \leq \vert A(\hat{\mathcal{O}}) \vert L \epsilon^{\frac{1}{2}}$  from Theorem \ref{Thm: Convergence of minima} for the noisy chessboard $g+0.1 \cdot W$, we approximate the constant from the CFC by 
\[
C \approx \min_{l \in A(\hat{\mathcal{O}}) } \frac{C_{\hat{O}_l}}{\vert \hat{O}_l \vert^2}
\]
and using $M_2=\sup_{u>0} \frac{\ln (1+u)}{\sqrt{u}} \approx 0.8045$, we get $L \approx 0.8045 \frac{1}{2\sqrt{C}} $. The convergence rate holds uniformly over $\mathcal{O} \in P_k$ and can only be attained if the segments shrink with $\epsilon$: equality in $\varphi_\epsilon(|O_l|) \leq L\epsilon^{\frac{1}{p}}$ requires $\frac{\epsilon}{C|O_l|^p} = u_\ast$, where $u_\ast \approx 3.92$ maximizes $u \mapsto \frac{\ln(1+u)}{\sqrt{u}}$. 
For the fixed partition $\hat{\mathcal{O}}$, the eigenvalues $\kappa_l = C_{\hat{O}_l} \approx 10^{-2}$
are bounded away from zero, so the exact difference $\sum_{l} \frac{|\hat{O}_l|}{2} \ln \big(1 + \frac{\epsilon}{\kappa_l}\big)$ is of order $\epsilon$ compared to our bound of order $\epsilon^{\frac{1}{2}}$ as $\epsilon \to 0$.
\begin{figure}[H]
    \centering
    \includegraphics[scale=0.75]{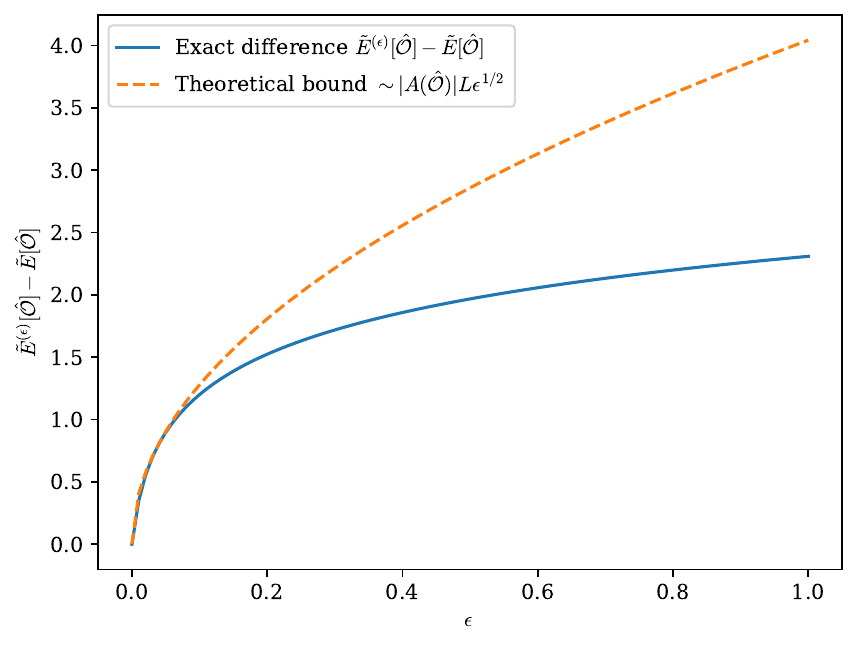}    
    \caption{Convergence of the reduced pGMS  energy to the reduced GMS energy of the image $g+ 0.1 \cdot W$ for the chessboard partition.}
\end{figure}

To perform experiments for RGB/HSI images ($N>1$), one has to study all projections $y^Tg$ for $y \in S^{N-1}$, and they are studied like grayscale images, and one also has to ensure that all numerical experiments, tests and results hold uniformly in $y \in S^{N-1}$. The choice of $N=1$ sidesteps this difficulty. The choice of a synthetic test image was also deliberate for the following reasons. First, the mode of failure is unambiguous: the pushforward measure of $g$ is purely atomic, so $g$ violates the CFC for every $C>0$ and $p>0$, regardless of discretization. A real image may already possess a Hölder continuous cumulative distribution, in which case the experiment of adding noise would demonstrate nothing. Second, we have applied this model to hyperspectral images \cite{Conrad_Berkels_2025_Hyperspectral}, where $N \approx 10$ after dimensionality reduction, but there are often segments with fewer than $N+1$ pixels. Recall that the discrete sample covariance matrix needs at least $N+1$ affinely independent pixels to be strictly positive definite. In these cases, the degeneracy of $C_{O_l}$ is a discretization effect rather than a property of the continuum image; it is the latter that we examine here. Third, the chessboard has no questionable ground truth, with only two natural choices for the number of segments ($k=2$ or $k=64$), so our experiments involve minimal methodological choices and test our theorems rather than the accuracy of the segmentation pipeline.

\section{Conclusion}
We have identified the pushforward measure $\nu$ of an image $g \in L^{\infty}((0,1)^2, \mathbb{R}^N)$ as the only invariant relevant for the existence of covariance floors. Theorem \ref{Thm: Equivalence} shows that, for an image, admitting covariance floors is equivalent to having a uniform Hölder norm bound on the marginal cumulative distributions $F_y$, and the condition is therefore invariant under measure-preserving re-arrangements of the domain $(0,1)^2$.
In particular, spatial regularity of $g$ is irrelevant to the CFC. The condition $p \geq 2$ rules out any scale-independent bound $C_O \succeq CI$ for $C>0$. 
 $g$ satisfying the CFC is generic in practice, as the noisy images satisfy $g + \sigma W \in \mathcal{A}(2)$ and these noisy images are actually dense in $L^{\infty}((0,1)^2,\mathbb{R}^N)$. For images $ g \in \mathcal{A}(p)$, the Gaussian Mumford--Shah model admits a lower bound given by the segment volumes $\vert O_l \vert$ and entropies $H(\mathcal{N}(m_{O_l},C_{O_l}))$. As it turns out, this lower bound combined with standard analysis yields existence of minimizers for the GMS, convergence of minimizers of the pGMS with convergence rates for the energies and convergence of a subsequence generated by the alternating minimization. The continuous setting requires different methods than the discrete setting, as the discrete setting has a prescribed minimum scale and only finitely many possible segments, but also requires each segment to have a minimum number of pixels.
In the continuous setting, the CFC constrains $\nu$ alone and is insensitive to resolution, whereas a discrete segment containing fewer than 
$N+1$ affinely independent pixels has singular sample covariance for reasons unrelated to the value distribution $\nu$. We isolate the mechanism in the continuous setting in our numerical experiments; however, in the hyperspectral image segmentation setting with $N\gg 1$, both the discrete and continuous obstructions are present. 
It remains open how to deal with all images $g \in L^{\infty}((0,1)^2, \mathbb{R}^N)$, in particular piecewise constant images, without falling back to perturbations (such as the pGMS) or restricting the set of possible $\Sigma$.

\printbibliography

\end{document}